\newenvironment{dem}{\begin{proof}[\bf Proof]}{\end{proof}}
\newtheorem{thm}{Theorem}
\newtheorem{lemma}[thm]{Lemma}
\newtheorem{prop}[thm]{Proposition}
\theoremstyle{definition}
\newtheorem{remark}[thm]{\bf Remark}
\numberwithin{subsection}{section}
\newcommand{\A}{\mathbb A}
\newcommand{\Hcal}{\mathcal H}
\newcommand{\Pro}{\mathbb P}
\newcommand{\Z}{\mathbb Z}
\newcommand{\Oc}{\mathcal O}
\newcommand{\Gm}{\mathbb G_{\textbf{m}}}
\DeclareMathOperator{\Pic}{Pic}
\DeclareMathOperator{\Aut}{Aut}
\DeclareMathOperator{\Char}{char}
\DeclareMathOperator{\GL}{GL}
\DeclareMathOperator{\Spec}{Spec}
\DeclareMathOperator{\Proj}{Proj}
\DeclareMathOperator{\Sym}{Sym}
\begin{document}
\numberwithin{thm}{section}

\title[The Chow ring of the stack of hyperelliptic
curves]
{The integral Chow ring of the stack of hyperelliptic
curves of even genus}
\author{Dan Edidin and Damiano Fulghesu}
\address{Department of Mathematics, University of Missouri, Columbia, MO 65211}
\email{edidin@math.missouri.edu, damiano@math.missouri.edu}
\maketitle

\begin{abstract} 
Let $g$ be an even positive integer.
In this paper we compute the integral Chow ring of the stack
of smooth hyperelliptic curves of genus $g$.
\end{abstract}

\section{Introduction}
A natural question, inspired by
David Mumford's classic paper {\em Toward an enumerative geometry of
  the moduli space of curves} \cite{Mum:83}, is to compute the integral
Chow rings of the stacks ${\mathcal M}_g$ and $\overline{{\mathcal
    M}}_g$ of smooth and stable curves of a given genus $g$.  In
\cite{EdGr:98} the integral Chow rings of the stacks of elliptic
curves ${\mathcal M}_{1,1}$ and $\overline{{\mathcal M}}_{1,1}$, were
computed. In an appendix to the same paper Vistoli  \cite{Vis:98} computed the Chow
ring of ${\mathcal M}_2$. However, for $g\geq 3$ almost nothing is
known. The only positive result is the computation of $\Pic({\mathcal
  M}_g)$ and $\Pic(\overline{{\mathcal M}}_g)$ by Arabarello and
Cornalba \cite{ArCo:87} using Harer's computation of the second
homology group of the mapping class group \cite{Har:83}. Even
rationally, the Chow rings of ${\mathcal M}_g$ have been computed only
up to to $g =5$ \cite{Fab:90a, Fab:90b, Iza:95}.

In this paper we focus our attention to hyperelliptic
curves and obtain a result valid for all (even) genera.
To be precise, let ${\mathcal H}_g$ denote the stack of smooth hyperelliptic
curves of even genus $g$ defined over a field $k$.
\begin{thm} \label{thm.main}
Assume that $\Char k = 0$
or $\Char k > 2g$. Then 
$$A^*({\mathcal H}_g) = \Z[c_1,c_2]/(2(2g+1)c_1, g(g-1)c_1^2 - 4g(g+1)c_2).$$
\end{thm}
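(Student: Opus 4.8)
The plan is to realize $\Hcal_g$ as a quotient stack and then to compute its Chow ring by $\GL_2$-equivariant intersection theory, in the spirit of Vistoli's calculation of $A^*(\Mcal_2)$ \cite{Vis:98}. Since $g$ is even, the work of Arsie and Vistoli presents $\Hcal_g$ as a quotient $[(\A^{2g+3}\setminus\Delta)/\GL_2]$, where $\A^{2g+3}$ is the affine space of binary forms of degree $2g+2$ (the branch forms), $\Delta\subset\A^{2g+3}$ is the discriminant hypersurface of forms with a repeated root, and $\GL_2$ acts on $\A^{2g+3}=\Sym^{2g+2}(V^\vee)\otimes(\det V)^{\otimes m}$ through a definite twist $m$ dictated by the even-genus geometry (in the odd case the presentation requires a different structure group, which is why the even hypothesis is imposed). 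The classes $c_1,c_2$ of the theorem are $c_1(V),c_2(V)$ for the standard representation $V$. Because $\A^{2g+3}$ is a representation, $A^*_{\GL_2}(\A^{2g+3})=A^*(B\GL_2)=\Z[c_1,c_2]$, so by the localization sequence
\[ A^*_{\GL_2}(\Delta)\xrightarrow{\ i_*\ }\Z[c_1,c_2]\longrightarrow A^*(\Hcal_g)\longrightarrow 0 \]
everything reduces to identifying the ideal $I=\Image(i_*)$.

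To compute $I$ I would introduce the incidence variety $W=\{(\ell,f)\in\Pro(V)\times\A^{2g+3} : f \text{ vanishes to order }\ge 2\text{ at }\ell\}$. It is the zero locus of the tautological section of the pullback of the rank-two bundle of first principal parts $\Qc=P^1(\Lc)$, where $\Lc$ is the twisted line bundle on $\Pro(V)$ of fibrewise degree $2g+2$; thus $W\to\Pro(V)$ is a $\GL_2$-equivariant vector bundle, and the second projection $q\colon W\to\A^{2g+3}$ is proper and birational onto $\Delta$ (a general form in $\Delta$ has a single double root). Consequently $A^*_{\GL_2}(W)=A^*_{\GL_2}(\Pro(V))=\Z[c_1,c_2][h]/(h^2-c_1h+c_2)$ is free of rank two on $1,h$, and since $c_1,c_2$ are pulled back through $q$, the projection formula shows that $\Image(q_*)$ is the ideal $(q_*1,q_*h)$.

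The two generators are obtained by pushing $c_2(\Qc)$ through the fibre $\Pro(V)$. From the principal parts sequence $0\to\Lc\otimes\Omega^1_{\Pro(V)}\to\Qc\to\Lc\to 0$, together with $c_1(\Lc)=(2g+2)h+mc_1$, $c_1(\Omega^1_{\Pro(V)})=c_1-2h$, and the relation $h^2=c_1h-c_2$, a direct calculation with the twist $m$ fixed above gives
\[ c_2(\Qc)=2(2g+1)\,c_1h+g(g-1)\,c_1^2-4g(g+1)\,c_2 . \]
Integrating over the fibre ($p_*1=0$, $p_*h=1$) then yields $q_*1=2(2g+1)c_1$ and $q_*h=(g+1)(g+2)c_1^2-4g(g+1)c_2$, so that
\[ q_*h-c_1\cdot q_*1=g(g-1)\,c_1^2-4g(g+1)\,c_2 . \]
Hence $(q_*1,q_*h)=\big(2(2g+1)c_1,\ g(g-1)c_1^2-4g(g+1)c_2\big)$, exactly the ideal appearing in the theorem.

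It remains to prove that $I=\Image(i_*)$ equals $\Image(q_*)$, and this is the main obstacle. Since $q$ factors through $\Delta$ one inclusion is automatic, but $\Delta$ is singular and $q$ is only birational onto it, so I must show that the deeper strata of the discriminant contribute no further relations. I would stratify $\Delta$ by the root type of the form — forms with two double roots, with a triple root, and so on — and argue by descending induction: each stratum is, up to a finite cover, again an incidence variety fibred over a symmetric product of $\Pro(V)$, its fundamental class factors through $W$, and its equivariant pushforward therefore already lies in $(q_*1,q_*h)$. Controlling these higher-multiplicity loci, and verifying that no stratum produces a class outside the ideal generated by the first two, is the delicate point; once it is settled, the localization sequence gives $A^*(\Hcal_g)=\Z[c_1,c_2]/\big(2(2g+1)c_1,\ g(g-1)c_1^2-4g(g+1)c_2\big)$, as claimed.
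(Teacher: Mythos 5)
Your setup and your computation of the two generators are sound, and the latter takes a genuinely different route from the paper: where the paper (Lemma \ref{lem.alpha10alpha11}) computes $\alpha_{1,0}$ and $\alpha_{1,1}$ by pushing diagonal classes through the multiplication map $(\Pro^1)^N \to \Pro^N$ at the level of the maximal torus, you obtain the same ideal from the incidence variety $W$, realized as the zero locus of the tautological section of the principal parts bundle, together with the projection formula; up to sign and normalization conventions your $(q_*1,\, q_*h)$ agrees with the paper's $\bigl(\alpha_{1,0}(gc_1), \alpha_{1,1}(gc_1)\bigr)$. You also shortcut the paper's passage through $\Pro^{2g+2}$ (Lemma \ref{lem.reduction}) by running the localization sequence directly on the affine space, which is legitimate.

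The gap is the step you yourself flag as ``the delicate point,'' and it is not a technicality --- it is the technical heart of the paper (essentially all of Section 4). Two separate issues are buried there. First, $\Image(i_*)$ is generated by pushforwards of \emph{all} invariant cycles on $\Delta$, not just fundamental classes of strata, and $W \to \Delta$ is not an equivariant envelope: it is $2$-to-$1$ over the locus $\Delta_2$ of forms with two double roots, so cycles supported there need not lift to $W$ at all. Concretely, only $2[\Delta_2]$ is visibly in $\Image(q_*)$, while $[\Delta_2]$ itself lies in $\Image(i_*)$; proving that $[\Delta_2]$ lies in $(q_*1, q_*h)$ \emph{integrally} is exactly the paper's Lemma \ref{lem.alpha20}, and the analogous statements for the deeper strata are Lemmas \ref{lem.hrm} and \ref{lem.alphar0}. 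Your phrase ``up to a finite cover'' concedes precisely the multiplicities ($2$, and more generally $r!(N-2r)!$) that must be divided out; rationally this is immediate, but integrally it is not, and the integral structure is the whole content of the theorem. Second, the paper can only carry out these divisibility arguments after restricting to the maximal torus $T$, where the non-$\GL_2$-invariant coordinate hyperplanes $h_i$ and the splitting of $[\Delta_{\Pro^1\times\Pro^1}]$ (Lemma \ref{lem.p1diag}) are available, and it then needs specialness of $\GL_2$ (Proposition \ref{prop.tsummand}, Theorem \ref{thm.deg}) to transfer the result back; your plan to run the induction directly $\GL_2$-equivariantly has no visible substitute for these classes. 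Finally, surjectivity of the pushforward from your proposed covers of the strata onto $A_*^{\GL_2}(\Delta)$ --- the envelope property, Proposition \ref{prop.stratpn} --- rests on Vistoli's unique-lifting lemma, which is exactly where the hypothesis $\Char k = 0$ or $\Char k > 2g$ enters; your proposal never invokes the characteristic assumption, which is itself a sign that this step is missing.
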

\begin{remark}
When $g =2$, every curve is hyperelliptic and our theorem recovers
Vistoli's presentation for $A^*({\mathcal M}_2)$. Theorem
\ref{thm.main} also recovers Arsie and Vistoli's result \cite{ArVi:04}
that $\Pic({\mathcal H}_g)$ is cyclic of order $2(2g+1)$.
\end{remark}

\begin{remark} Note that the generators $c_1, c_2$ of $A^*({\mathcal H}_g)$  are not in general 
tautological classes. Gorchinskiy and Viviani \cite{GoVi:06} observed that
$\Pic({\mathcal H}_g)$ is not generated by
$\lambda$, the first Chern class of the Hodge bundle, when $g\equiv 0 \mod 4$.
In Section \ref{sec.taut} we describe a natural vector bundle on ${\mathcal H}_g$
whose Chern classes generate the Chow ring.
\end{remark}

This theorem is quite surprising since it implies that integral Chow
rings of stacks of hyperelliptic curves of even genus have a
remarkably simple structure. This is in marked contrast to the situation
in topology, where results about the additive structure of the
cohomology of the hyperelliptic mapping class group are quite
complicated (cf. \cite{BCP:01}).

Our techniques are purely algebraic and make essential use of results
of Arsie and Vistoli \cite{ArVi:04}.  As observed in \cite[Example
3.5]{ArVi:04} the stack ${\mathcal H}_g$ may be identified as
the stack of double covers of $\Pro^1$ branched at $2g+2$ points. Theorem
4.1 of \cite{ArVi:04} then implies that if $g$ is even ${\mathcal
H}_g$ is the quotient (stack) by an action of $\GL_2$ on the open set
in ${\mathbb A}^{2g+3}$ corresponding to homogeneous binary forms of
degree $2g+2$ with distinct roots.  Using a basic result in
equivariant intersection theory we then identify $A^*({\mathcal H}_g)$
with the $\GL_2$-equivariant Chow ring of this open set.

From this description it immediately follows from
the basic exact sequence in Chow groups that $A^*({\mathcal H}_g)$
is a quotient of $A^*_{\GL_2} \A^{2g+3} = \Z[c_1, c_2]$. Because
inclusion of an open set does not induce a surjection on
cohomology, the calculation in topology is much more difficult.
Note that this situation is reversed 
for ${\mathcal M}_g$ where there seems to be no algebraic way to
get information about the Chow groups for general $g$.

Following a strategy employed by Vistoli in his calculation 
of $A^*({\mathcal M}_2)$ the computation of the equivariant Chow ring
can be reduced to the calculation of the $\GL_2$-equivariant
Chow ring of $\Pro^{2g+2} \smallsetminus \Delta_1$, where $\Delta_1$
is the locus of forms divisible by a square. Rather than
tackle this problem directly, as Vistoli did for $g=2$, we reduce to the maximal
torus $T \subset \GL_2$. The technical heart of this
paper is the proof  that, for any $N$,
$A^*_T(\Pro(\Sym^N E^*) \smallsetminus \Delta_1)$ is the quotient 
of $A^*_T(\Pro(\Sym^N E^*))$ by an ideal generated by two
classes (Proposition \ref{prop.deg}), where $E$ is the defining representation
of $\GL_2$. Because $\GL_2$ is {\em special}\footnote{This means that every
$\GL_n$-torsor is locally trivial in the Zariski topology}
we can then recover the $\GL_2$-equivariant Chow ring.

\section{Background on group actions and equivariant intersection theory}
\subsection{Group actions and representations}
Let $k$ be a field and let $G$ be an algebraic group over $k$.
If $V$ is a representation of $G$ over $k$, then, when the context is clear,
we refer to the scheme $\Spec(\Sym V^*)$ as $V$. With this convention
if $x \in \Spec(\Sym V^*)$ is a $k$-valued point corresponding to 
a vector $v \in V$ then for any $g \in G(k)$,
$gx$ is the $k$-valued point corresponding to the vector $gv$.
If the action of $G$ on $V$ commutes with the diagonal action of ${\mathbb G}_m$
on $V$ then there is an induced action of $G$ on $\Pro(V) := \Proj(\Sym V^*)$.
As a consequence, if $d$ is positive integer then the global sections $H^0(\Pro(V),{\mathcal O}_{\Pro(V)}(d))$ correspond to the $G$-module $\Sym^d V^*$.

\subsection{Some general facts about equivariant Chow groups}
Equivariant Chow groups are defined in the paper \cite{EdGr:98}.
We briefly 
recall some basic facts and notation that we will use in our computation.

Let $G$ be a linear algebraic group defined over a field $k$.  For any
algebraic space $X$ we denote the direct sum of the equivariant Chow
groups by $A_*^G(X)$.  If $X$ is smooth then there is a product
structure on equivariant Chow groups and we denote the equivariant
Chow ring by $A^*_G(X)$.  Following standard notation, we denote the
equivariant Chow ring of a point by $A^*_G$.  Flat pullback $X \to
\Spec k$ makes the equivariant Chow groups $A_*^G(X)$ into an
$A^*_G$-module. When $X$ is smooth the equivariant Chow ring
$A^*_G(X)$ becomes an $A^*_G$ algebra.

The relation between equivariant Chow rings and Chow rings
of quotient stacks is given by the following result.
\begin{prop}\cite[Propositions 17, 19]{EdGr:98} \label{prop.chowstack}
Let $G$ be an algebraic group and let $X$ be a smooth $G$-space
and let ${\mathcal F} = [X/G]$ be the quotient stack.
Then the equivariant Chow ring $A^*_G(X)$
is independent of the presentation for ${\mathcal F}$ and may
be identified with the integral Chow ring of ${\mathcal F}$.
\end{prop}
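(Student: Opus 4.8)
The plan is to prove the two assertions in turn: first that the ring $A^*_G(X)$ attached to a presentation $\mathcal{F} = [X/G]$ does not depend on the chosen presentation, and then to \emph{define} the integral Chow ring of $\mathcal{F}$ to be this common value. Recall the Edidin--Graham definition of the equivariant groups: for each codimension one chooses a representation $V$ of $G$ together with a $G$-invariant open $U \subseteq V$ on which $G$ acts freely and whose complement $V \smallsetminus U$ has codimension exceeding $\dim X - i$, and sets $A^G_i(X) := A_{i + \dim V - \dim G}(X \times_G U)$, where $X \times_G U = (X \times U)/G$ is an algebraic space because $G$ acts freely on $X \times U$. The existence of such pairs $(V,U)$ with complement of arbitrarily large codimension is guaranteed (after Totaro) by embedding $G$ in some $\GL_n$ and taking sums of copies of the standard representation. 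The two tools I will use throughout are homotopy invariance (an affine bundle induces an isomorphism on Chow groups) and the localization sequence (removing a closed subset of codimension $c$ does not change Chow groups in codimension $< c$).

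The key geometric observation is that the mixed space $X \times_G U$ is itself an approximation of $\mathcal{F}$. Indeed, the $G$-equivariant projection $X \times U \to X$ descends to a representable morphism $\pi\colon X \times_G U \to \mathcal{F}$, and base-changing $\pi$ along the atlas $X \to \mathcal{F}$ (a $G$-torsor) returns the projection $X \times U \to X$; hence $\pi$ is an étale-locally trivial fiber bundle with fiber $U$. Moreover $X \times_G U$ sits as an open subspace of the vector bundle $X \times_G V \to \mathcal{F}$, with complement $X \times_G (V \smallsetminus U)$ of codimension equal to that of $V \smallsetminus U$. Thus, in each fixed codimension, $A_*(X\times_G U)$ stabilizes to a quantity one expects to depend only on $\mathcal{F}$.

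To establish independence of presentation, suppose $\mathcal{F} = [X/G] \cong [X'/G']$ and choose approximations $U \subseteq V$ for $(X,G)$ and $U' \subseteq V'$ for $(X',G')$. I would form the fiber product $W := (X \times_G U) \times_{\mathcal{F}} (X' \times_{G'} U')$ over the stack. Since $X' \times_{G'} U' \to \mathcal{F}$ is the open-subbundle-of-a-vector-bundle described above, its pullback along the representable map $X \times_G U \to \mathcal{F}$ exhibits $W$ as an open subspace of a vector bundle over the \emph{space} $X \times_G U$, with complement of the same high codimension as $V' \smallsetminus U'$; homotopy invariance and localization then give $A_*(W) \cong A_*(X \times_G U)$ in the stable range. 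By the symmetric argument $A_*(W) \cong A_*(X' \times_{G'} U')$, so $A^*_G(X) \cong A^*_{G'}(X')$. This is an isomorphism of graded rings once one notes that the class computed in $A_{i+\dim V - \dim G}(X \times_G U)$ always has codimension $\dim X - i$, independent of $V$, so that the grading is intrinsic; smoothness of $X$ makes all approximating spaces smooth, so the intersection product is defined and each comparison above respects it. Taking $X' = X$ and $G' = G$ with two different pairs $(V,U)$ and $(V',U')$ recovers the well-definedness of the equivariant groups themselves, so a single argument handles both points.

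The main obstacle is the bookkeeping that keeps every object an algebraic space and every comparison map a bundle of the asserted type: one must check that the $G$- and $G'$-actions remain free on the relevant products, that $W$ really is the pullback of a vector bundle so that homotopy invariance applies, and that the stabilization ranges are compatible, i.e. that the codimensions of $V \smallsetminus U$ and $V' \smallsetminus U'$ can be taken simultaneously larger than any prescribed bound. The crux is verifying that $\pi\colon X \times_G U \to \mathcal{F}$ is a genuine, étale-locally trivial fiber bundle rather than merely a morphism of stacks, since it is exactly this descent statement that licenses the fiber-product computation; once it is in hand, the homotopy-invariance and localization steps are routine.
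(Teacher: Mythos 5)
This proposition is not proved in the paper at all: it is quoted verbatim from Edidin--Graham \cite{EdGr:98} (Propositions 17 and 19 there), so the comparison must be with the cited source rather than with anything in the text. Your reconstruction is essentially correct and captures the real content. The one substantive difference from the argument in \cite{EdGr:98} is where the fiber product over the stack is taken: Edidin and Graham form $Z = X \times_{\mathcal F} X'$ from the two atlases themselves, observe that $Z$ carries commuting $G$- and $G'$-actions making $Z \to X$ a $G'$-torsor and $Z \to X'$ a $G$-torsor, and then invoke their earlier result that equivariant Chow groups are invariant under torsors; you instead fiber-product the finite-dimensional Borel approximations $X \times_G U$ and $X' \times_{G'} U'$ over $\mathcal F$ and argue directly with homotopy invariance and the localization sequence. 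Your version effectively inlines the proof of torsor invariance, and it has the pleasant feature you point out, that taking $X'=X$, $G'=G$ with a different pair $(V',U')$ simultaneously reproves well-definedness of the equivariant groups; the cost is exactly the bookkeeping you flag (representability of $X\times_G U \to \mathcal F$, so that $W$ is an algebraic space, and the identification of $W$ as the open complement of a high-codimension closed subset in the pulled-back vector bundle $P\times_{G'}V'$), all of which does go through. Two small points to tighten: the identification with ``the integral Chow ring of $\mathcal F$'' is, in the Edidin--Graham framework, a definition made legitimate by your independence statement (rationally one can compare with pre-existing definitions, but that is a separate result), so your framing is the right one; and for the isomorphism to deserve the word ``identified'' you should also check that the comparison isomorphisms coming from your spans are compatible under a third presentation (a routine cocycle verification via a triple fiber product), which you do not mention but which presents no difficulty.
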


\medskip

\subsection{Equivariant Chow rings for $\GL_n$ actions}
Let $T = \Gm^n$ be a maximal torus.  Because $\GL_n$ is special the
restriction homomorphism $A^*_{\GL_n} \to A^*_T$ is injective and the
image consists of the classes invariant under the action of the Weyl
group $W(T,\GL_n) = S_n$.  Hence, we may view $A^*_{\GL_n}$ as a
subalgebra of $A^*_T$.  More generally \cite[Proposition 3.6]{EdGr:98}
or \cite[Theorem 6.7]{Bri:97a}) imply that if $X$ is an algebraic space then the
restriction map $A_*^{\GL_n}(X) \to A_*^T(X)$ is an injective
homomorphism of $A^*_{\GL_n}$-modules.  Likewise, if $X$ is smooth,
the restriction map $A^*_{\GL_n} X \to A^*_T X$ is an injective
homomorphism of $A^*_{\GL_n}$-algebras.  In both cases the images
consist of elements which are invariant under the natural action of
the Weyl group.  If $A_*^{\GL_n}(X)$ is a flat $A^*_{\GL_n}$ module
then \cite[Theorem 6.7]{Bri:97a} also implies that $A_*^{\GL_n}(X) =
(A_*^T(X))^{S_n}$.
\footnote{In \cite{EdGr:98} it was incorrectly claimed that if $G$ is
special and $T$ is a maximal torus then for all $G$-spaces $X$,
$A^*_G(X) = (A^*_T(X))^W$ where $W=W(G,T)$ is the Weyl group. The
second author is grateful to Reyer Sjamaar for pointing out this
error.}

The following result which we
will be very useful in the proof of Theorem \ref{thm.main}.
\begin{prop} \label{prop.tsummand}
Let $G$ be a special algebraic group and let $T \subset G$
be a maximal torus. If $X$ is a smooth $G$-space then $A^*_G(X)$
is (non-canonically) a summand in the $A^*_G(X) $-module $A^*_T(X)$.
\end{prop}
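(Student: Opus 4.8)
The plan is to recognize the restriction map $A^*_G(X)\to A^*_T(X)$ as flat pullback along a flag bundle, and then split it by a proper pushforward. Choose a representation $V$ of $G$ with an open $U\subseteq V$ on which $G$ acts freely and whose complement has high codimension, so that, in a fixed range of degrees, $A^*_G(X)=A^*(X_G)$ and $A^*_T(X)=A^*((X\times U)/T)$, where $X_G=(X\times U)/G$ and $P:=X\times U\to X_G$ is the associated $G$-torsor. The intermediate quotient is an associated bundle, $(X\times U)/T=P\times_G(G/T)$, and the further quotient map $P\times_G(G/T)\to X_G$ is precisely the morphism inducing the restriction $A^*_G(X)\to A^*_T(X)$ as flat pullback. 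Since the projection $P\times_G(G/T)\to P\times_G(G/B)$ is an affine bundle (fiber the unipotent radical of $B$, an affine space $\A^N$) it induces an isomorphism on Chow groups compatible with the maps down to $X_G$. I would therefore replace $G/T$ by $G/B$ and work with the \emph{proper} flag bundle $\pi\colon \mathrm{Fl}:=P\times_G(G/B)\to X_G$, since $G/B$ is projective.

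Next I would invoke speciality. Because $G$ is special, the torsor $P\to X_G$ is Zariski-locally trivial, hence so is the associated bundle $\mathrm{Fl}\to X_G$ with fiber the flag variety $G/B$. The Bruhat decomposition writes $G/B$ as a disjoint union of affine cells, so $\mathrm{Fl}\to X_G$ is a cellular fibration; consequently $A^*(\mathrm{Fl})$ is a free $A^*(X_G)$-module and, crucially over $\Z$, the restriction to a fiber $A^*(\mathrm{Fl})\to A^*(G/B)$ is surjective. I would fix a class $\alpha\in A^{d}(\mathrm{Fl})$, where $d=\dim G/B$, that restricts on each fiber to the point class generating $A^{d}(G/B)\cong\Z$.

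With $\alpha$ in hand the splitting is formal. The map $\pi$ is smooth and proper, so proper pushforward $\pi_*$ is defined (lowering degree by $d$), and since $\alpha$ restricts to the fiberwise point class we have $\pi_*\alpha=1\in A^0(X_G)$. Defining $s(\beta)=\pi_*(\alpha\cdot\beta)$, the projection formula gives $s(\pi^*a)=\pi_*(\alpha\cdot\pi^*a)=a\cdot\pi_*\alpha=a$, so $s\circ\pi^*=\mathrm{id}$ and $s$ is $A^*(X_G)$-linear. Thus $\pi^*$ is a split injection of $A^*(X_G)$-modules; transporting back through the identifications, the restriction $A^*_G(X)\to A^*_T(X)$ exhibits $A^*_G(X)$ as a direct summand of the $A^*_G(X)$-module $A^*_T(X)$, the splitting $s$ (which depends on $U$ and on $\alpha$) being the source of the asserted non-canonicity.

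The main obstacle is the \emph{integral} nature of the statement. Rationally one has $A^*_G(X)_\Q=(A^*_T(X)_\Q)^{W}$, and Weyl-group averaging produces a projector immediately; but averaging introduces denominators and is unavailable over $\Z$. The entire weight of the argument therefore rests on producing the class $\alpha$ with $\pi_*\alpha=1$ integrally, and this is exactly the point at which speciality is essential: it makes $\mathrm{Fl}\to X_G$ Zariski-locally trivial, hence a cellular fibration, which is what guarantees the integral surjectivity of restriction to a fiber. The remaining verifications—that the approximation $U$ has complement of sufficiently high codimension to compute the relevant Chow groups, and that the affine-bundle and associated-bundle identifications are compatible with the various pullbacks—are routine.
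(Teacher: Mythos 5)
Your proposal follows the same skeleton as the paper's proof: approximate $A^*_G(X)$ and $A^*_T(X)$ by Chow groups of mixed quotients $X\times_G U$ and $X\times_T U$, pass from $T$ to a Borel $B$ via the affine bundle $X\times_T U\to X\times_B U$, use speciality to make the $G/B$-bundle $\pi\colon X\times_B U\to X\times_G U$ Zariski-locally trivial, and then split $\pi^*$ by a proper pushforward. The paper outsources this last step to \cite[Lemma 7]{EdGr:97}; you attempt to prove it, and that is where the gap lies. The claim that a Zariski-locally trivial $G/B$-bundle is a \emph{cellular fibration} (because the fiber has a Bruhat decomposition) is not justified, and is in fact false in general. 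Bruhat cells are only $B$-invariant, not $G$-invariant, so they do not descend to globally defined subschemes of an associated bundle $P\times_G(G/B)$; producing such a global cell structure amounts to reducing the structure group of the torsor $P$ to $B$, i.e.\ to finding an honest section of $\pi$, whereas Zariski-local triviality only provides rational sections. Concretely, for $G=\GL_2$ the bundle $\Pro(T_{\Pro^2})\to\Pro^2$ is Zariski-locally trivial with fiber $\Pro^1=\GL_2/B$, but a cellular structure lifting $\Pro^1=\A^1\sqcup\A^0$ would yield a section, hence a line subbundle of $T_{\Pro^2}$ with locally free quotient, forcing $c(T_{\Pro^2})=1+3h+3h^2$ to factor into linear terms over $\Z$, which it does not. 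So neither the asserted cellularity, nor your route to the integral surjectivity of restriction to a fiber and the existence of $\alpha$, follows from speciality as claimed.

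The class $\alpha$ you need does exist, but by a much softer argument, which is essentially the content of the cited Lemma 7: Zariski-local triviality gives a section $\sigma$ of $\pi$ over a dense open subset $U_0$ of the base; let $Z=\overline{\sigma(U_0)}$ and $\alpha=[Z]\in A^d(X\times_B U)$, $d=\dim G/B$. Then $\pi|_Z$ is proper and birational onto the base, so $\pi_*\alpha=1$, and your projection-formula splitting $s(\beta)=\pi_*(\alpha\cdot\beta)$, $s\circ\pi^*=\mathrm{id}$, goes through verbatim: only the degree of $\alpha$ over the \emph{generic} point of the base matters, not its restriction to every fiber, and the freeness of $A^*(X\times_B U)$ as a module over $A^*(X\times_G U)$ is never needed. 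With this replacement your argument is correct and coincides with the paper's; the dependence of the splitting on the choice of rational section (and on the approximation $U$) is exactly the non-canonicity asserted in the statement.
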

\begin{proof}  
Fix an integer $i$ and let $V$ be a representation of $G$ which
contains an open set $U \subset V$ on which $G$ acts freely such that
$V\smallsetminus U$ has codimension $>i$.  Then we can
identify $A^i_G(X) = A^i(X \times_G U)$ and $A^i_T(X) = A^i(X \times_T
U)$. The restriction map $A^i_G(X) \to A^i_T(X)$ corresponds to the
flat pullback $\pi \colon X \times_T U \to X \times_G U$. Let $B
\supset T$ be a Borel subgroup. Since $B/T$ is isomorphic to  affine space we may
also identify $A^i_T(X)$ with $A^i(X\times_B U)$.  Since $G$ is
special the $G/B$ bundle $p \colon X \times_B U \to X \times_G U$ is
locally trivial in the Zariski topology.  Hence by \cite[Lemma
7]{EdGr:97}, $A^*(X \times_G U)$ is (non-canonically) a summand in
$A^*(X \times_B U)$.
\end{proof}

\subsection{Chern classes and equivariant Chow rings
of projective spaces} 

If $V$ is a representation of $G$, then $V$
defines a $G$-equivariant vector bundle over $\Spec k$. Consequently a
representation $V$ of rank $r$ has {\it Chern classes} $c_1(V), \ldots ,
c_r(V) \in A^*_G$. If $X$ is a smooth algebraic space then we will view
the Chern classes as elements of the equivariant Chow ring $A^*_G(X)$
via the pullback $A^*_G \to A^*_G(X)$.

Now let  $E$ be the defining representation of $\GL_n$. The 
total character of the $T$-module $E$ decomposes into a sum of linearly
independent characters 
$\lambda_1 +  \lambda_2 + \ldots \lambda_n$ and
we get $A^*_T = \Z[t_1, \ldots t_n]$ where $t_i = c_1(\lambda_i)$.
The Weyl group $S_n$ acts on $A^*_T$ by permuting the $t_i$'s and 
as result $A^*_{\GL_n} = \Z[c_1, \ldots , c_n]$ where 
$c_i = c_i(E)$ is the $i$-th elementary symmetric polynomial in $t_1, \ldots , t_n$
\cite{EdGr:97}.

If $V$ is a representation of rank $r$ of
$\GL_n$
then then 
the total character of the $T$-module $V$ decomposes as sum of characters $\mu_1 +
\ldots \mu_r$. Let
$l_i = c_1(\mu_i)$. 
We refer to the classes $l_1, \ldots l_r$ as the {\it Chern roots}
of $V$ and view them as elements in $A^*_T X$. Any symmetric
polynomial in the Chern roots is an element of $A^*_{\GL_n} X$.

Let $V$ be an $(r+1)$-dimensional representation of $\GL_n$.
Since the action of $\GL_n$ commutes with the diagonal action of ${\mathbb G}_m$
on $V$ there is an induced action of $\GL_n$ on $\Pro(V)$
a canonical $\GL_n$-linearization of the sheaf
${\mathcal O}_{\Pro(V)}(1)$.

The following easy lemma is proved for torus actions in 
\cite[Section 3.3]{EdGr:98} 
and follows in general from the projective
bundle theorem \cite[Example 8.3.4]{Ful:84}.
\begin{lemma} \label{lem.projspace}
The $\GL_n$ (resp. $T$) equivariant Chow ring of $\Pro(V)$ has the following presentation.
$$A^*_{\GL_n}(\Pro(V))= 
A^*_{\GL_n}[\xi]/(\xi^{r+1} + C_1 \xi^n + \ldots C_{r+1})$$
and
$$A^*_T(\Pro(V)) = A^*_T[\xi]/\left(\prod_{i =1}^r (\xi + l_i)\right)$$
where $\xi = c_1(\Oc_{\Pro(V)}(1))$,  $C_1, \ldots C_r \in 
A^*_{\GL_n}$ (resp. $l_1, \ldots , l_r$)  
are the equivariant Chern classes (resp. Chern roots) 
of the representation $V$.
\end{lemma}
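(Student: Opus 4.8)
The plan is to deduce both presentations from the ordinary projective bundle theorem \cite[Example 8.3.4]{Ful:84} applied to the algebraic approximations of the equivariant Chow groups, so that the real content is just the identification of the relevant bundle and its Chern classes. Fix a degree $i$. As in the proof of Proposition \ref{prop.tsummand}, I would choose a representation $W$ of $G$ (with $G = \GL_n$, respectively $G = T$) containing an open set $U$ on which $G$ acts freely and whose complement has codimension greater than $i$. Then by definition $A^i_G = A^i(U/G)$ and $A^i_G(\Pro(V)) = A^i(\Pro(V) \times_G U)$, and since the action is free the associated construction $\mathcal{V} := (V \times U)/G$ is a genuine vector bundle of rank $r+1$ over the smooth base $U/G$.

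The key step is to observe that projectivization commutes with the mixing construction: because the $G$-action on $V$ is linear and hence compatible with the $\Gm$-scaling, one has a canonical isomorphism $\Pro(V) \times_G U = (\Pro(V) \times U)/G \cong \Pro(\mathcal{V})$ over $U/G$, under which $\Oc_{\Pro(V)}(1)$ corresponds to $\Oc_{\Pro(\mathcal{V})}(1)$. Applying the projective bundle theorem to $\Pro(\mathcal{V}) \to U/G$ then expresses $A^*(\Pro(\mathcal{V}))$ as $A^*(U/G)[\xi]/\bigl(\sum_{j=0}^{r+1} c_j(\mathcal{V})\,\xi^{r+1-j}\bigr)$, free as an $A^*(U/G)$-module on $1, \xi, \ldots, \xi^r$, where $\xi = c_1(\Oc_{\Pro(\mathcal{V})}(1))$. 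Under the identifications above, $\xi$ is the equivariant class $c_1(\Oc_{\Pro(V)}(1))$ and $c_j(\mathcal{V})$ is the equivariant Chern class $C_j = c_j(V) \in A^*_{\GL_n}$; since these classes are defined independently of the approximation and the identifications respect intersection products, letting $i$ grow assembles the graded pieces into the ring presentation $A^*_{\GL_n}(\Pro(V)) = A^*_{\GL_n}[\xi]/(\xi^{r+1} + C_1 \xi^r + \cdots + C_{r+1})$.

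For the torus statement I would restrict the action to $T$, so that $V$ splits as a direct sum of characters $\mu_1 + \cdots + \mu_{r+1}$ and correspondingly $\mathcal{V}$ splits as a direct sum of line bundles with first Chern classes $l_i = c_1(\mu_i)$. The Whitney sum formula then gives $\sum_{j=0}^{r+1} c_j(\mathcal{V})\,\xi^{r+1-j} = \prod_{i=1}^{r+1}(\xi + l_i)$, and substituting this factored relation yields $A^*_T(\Pro(V)) = A^*_T[\xi]/\bigl(\prod_{i=1}^{r+1}(\xi + l_i)\bigr)$. I expect the only real obstacle to be bookkeeping rather than anything conceptual: one must verify carefully that projectivization genuinely commutes with mixing, that $\Pro(\mathcal{V})$ is therefore the correct finite-dimensional approximation to $A^*_G(\Pro(V))$, and that the hyperplane class and the Chern classes $C_j$ (resp. the roots $l_i$) match up compatibly and independently of the chosen $U \subset W$. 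Once this is in place, both presentations drop out of the non-equivariant projective bundle theorem.
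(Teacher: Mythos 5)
Your proof is correct and is essentially the paper's own argument: the paper gives no details, simply citing \cite[Section 3.3]{EdGr:98} for the torus case and the projective bundle theorem \cite[Example 8.3.4]{Ful:84} in general, and your mixing-space construction ($\Pro(V)\times_G U \cong \Pro(\mathcal{V})$ over $U/G$, then the ordinary projective bundle theorem plus the splitting of $\mathcal{V}$ in the torus case) is exactly the standard unwinding of that citation. Note also that your version corrects two typos in the stated lemma: the relation should read $\xi^{r+1} + C_1\xi^r + \cdots + C_{r+1}$ and the torus product should run over all $r+1$ Chern roots.
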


\subsection{$T$-equivariant fundamental classes of invariant hypersurfaces}
Let $T$ be a torus and let $V$ be a finite dimensional $T$-module and let
${\mathcal O}_{\Pro(V)}(1)$ have the canonical linearization induced by the action
of $T$ on $V$.
\begin{lemma} \label{lem.Thypersurface}
Let $H \subset \Pro(V)$ be a $T$-invariant hypersurface defined by a 
homogeneous form $f \in \Sym^d(V^*)$ such that $z \cdot f = \chi^{-1}(z)f$
for some character $\chi \colon T \to {\mathbb G}_m$.
Then in $A^*_T(\Pro(V))$,
$$[H]_T = c_1^T({\mathcal O}_{\Pro(V)}(d)) + c_1(\chi)$$
\end{lemma}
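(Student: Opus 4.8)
The plan is to realize $[H]_T$ as the equivariant first Chern class of the line bundle cut out by the section $f$, after correcting the linearization so that $f$ becomes genuinely invariant. First I would observe that $f \in \Sym^d(V^*) = H^0(\Pro(V), \Oc_{\Pro(V)}(d))$ is a global section of $\Oc_{\Pro(V)}(d)$ whose zero scheme is exactly $H$, and that since $\Pro(V)$ is smooth and $H$ is a hypersurface, $f$ is a regular (nonzero) section cutting out the Cartier divisor $H$. With respect to the canonical linearization of $\Oc_{\Pro(V)}(d)$, however, the section $f$ is not invariant: the hypothesis $z \cdot f = \chi^{-1}(z) f$ says precisely that $f$ is a semi-invariant of weight $\chi^{-1}$.

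The key step is to twist the linearization. Let $L_\chi$ denote the one-dimensional representation on which $T$ acts through $\chi$, regarded as an equivariant line bundle on $\Spec k$ and pulled back to $\Pro(V)$. Then $\Oc_{\Pro(V)}(d) \otimes L_\chi$ is the same underlying line bundle $\Oc_{\Pro(V)}(d)$ equipped with a modified $T$-linearization, and under the new action a basis vector $v_0$ of $L_\chi$ satisfies $z \cdot (f \otimes v_0) = \chi^{-1}(z)\chi(z)\, (f \otimes v_0) = f \otimes v_0$. Thus $f$ is an \emph{invariant} global section of $\Oc_{\Pro(V)}(d) \otimes L_\chi$ with zero scheme $H$, which identifies the $T$-equivariant line bundle $\Oc_{\Pro(V)}(H)$ with $\Oc_{\Pro(V)}(d) \otimes L_\chi$.

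Finally I would invoke the equivariant refinement of the standard fact that the zero scheme of a regular section of a line bundle represents its first Chern class. Passing to the finite-dimensional approximations $\Pro(V) \times_T U$ used to define the equivariant Chow groups, the invariant section $f$ descends to an ordinary section whose zero locus is $H \times_T U$, so that $[H]_T = c_1^T(\Oc_{\Pro(V)}(d) \otimes L_\chi)$. Since equivariant Chern classes are additive under tensor product of line bundles and $c_1^T(L_\chi) = c_1(\chi)$ by definition, this yields $[H]_T = c_1^T(\Oc_{\Pro(V)}(d)) + c_1(\chi)$, as claimed.

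The only delicate point—and the main thing to get right—is the direction of the twist: one must verify that it is $L_\chi$, and not $L_{\chi^{-1}}$, that renders $f$ invariant, which is exactly what the sign in the hypothesis $z \cdot f = \chi^{-1}(z) f$ is designed to produce. Everything else reduces to the definitional additivity of equivariant Chern classes and the standard approximation description of $A^*_T(\Pro(V))$.
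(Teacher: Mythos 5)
Your proof is correct and is essentially the argument the paper gives: both identify the $T$-equivariant line bundle $\Oc_{\Pro(V)}(H)$ with $\Oc_{\Pro(V)}(d) \otimes \chi$ and conclude by additivity of $c_1^T$, with your twist direction checked correctly ($f \otimes v_0$ is invariant precisely because $z \cdot f = \chi^{-1}(z) f$). The only difference is cosmetic: the paper verifies the identification locally, noting that on each invariant affine chart the ideal sheaf of $H$ is generated by the $T$-eigenfunction $f/x_j^d$, whereas you work globally by twisting the linearization so that $f$ becomes an invariant section.
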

\begin{proof}
On the invariant affine $a_i \neq 0$, the ideal sheaf of
$H$ is generated by the $T$-eigenfunction $f/x_j$. Hence
${\mathcal O}(-H) = {\mathcal O}(-d) \otimes \chi^{-1}$
where ${\mathcal O}(-d)$ is given its canonical $T$-linearization.
\end{proof}

Let $V$ be a two-dimensional representation of $T$ and choose
coordinates so that $T$ acts by $z\cdot(a_0,a_1) = (\chi_{0}(z)a_0, \chi_{1}(z)a_1)$.
Let $\Pro^1 = \Pro(V)$ and let
$\Delta \subset \Pro^1 \times \Pro^1$ be the diagonal.
Then $\Delta$ is a $T$-invariant hypersurface for the diagonal action
of $T$ on $\Pro^1 \times \Pro^1$. 
\begin{lemma} \label{lem.p1diag}
In $A^*_T(\Pro^1)$ we have the identity
$$[\Delta_{\Pro^1 \times \Pro^1} ] = [0 \times \Pro^1] + [\Pro^1 \times 0] + c_1(\chi_1 \chi_0^{-1})$$
\end{lemma}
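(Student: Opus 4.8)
The plan is to exhibit $\Delta$ as the zero scheme of an explicit $T$-eigensection of a line bundle and then read off its class from Lemma \ref{lem.Thypersurface}. Fix a basis $e_0,e_1$ of $V$ with $z\cdot e_i=\chi_i(z)e_i$, so that the homogeneous coordinates $(a_0,a_1)$ on the first factor and $(b_0,b_1)$ on the second are the dual functionals and therefore transform with the inverse characters, $z\cdot a_i=\chi_i^{-1}(z)a_i$ and likewise for the $b_i$. The diagonal is cut out by the bihomogeneous form $f=a_0b_1-a_1b_0$, a section of $\Oc_{\Pro^1\times\Pro^1}(1,1)$. Both of its monomials have weight $(\chi_0\chi_1)^{-1}$, so $f$ is a $T$-eigenvector with $z\cdot f=(\chi_0\chi_1)^{-1}(z)f$.

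First I would record the evident extension of Lemma \ref{lem.Thypersurface} from $\Pro(V)$ to the product $\Pro^1\times\Pro^1$ with the line bundle $\Oc(1,1)$: the same local argument (on an invariant affine open set where $a_i\neq 0$ and $b_j\neq 0$ the ideal of $\Delta$ is generated by the eigenfunction $f/(a_ib_j)$) identifies $\Oc(-\Delta)$ with $\Oc(-1,-1)\otimes(\chi_0\chi_1)^{-1}$ carrying its canonical linearization. Hence
$$[\Delta]_T=c_1^T(\Oc_{\Pro^1\times\Pro^1}(1,1))+c_1(\chi_0\chi_1)=\xi_1+\xi_2+c_1(\chi_0\chi_1),$$
where $\xi_1,\xi_2$ denote the pullbacks of $\xi=c_1^T(\Oc_{\Pro^1}(1))$ along the two projections.

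Next I would eliminate $\xi_1,\xi_2$ in favour of the two rulings. Applying Lemma \ref{lem.Thypersurface} on a single factor to the fixed point $0=[0:1]$, which is cut out by the weight-$\chi_0^{-1}$ form $a_0$, gives $[0]_T=\xi+c_1(\chi_0)$; pulling back along the projections yields $[0\times\Pro^1]=\xi_1+c_1(\chi_0)$ and $[\Pro^1\times 0]=\xi_2+c_1(\chi_0)$. Substituting $\xi_1$ and $\xi_2$ into the formula above gives
$$[\Delta]_T=[0\times\Pro^1]+[\Pro^1\times 0]+c_1(\chi_0\chi_1)-2c_1(\chi_0)=[0\times\Pro^1]+[\Pro^1\times 0]+c_1(\chi_1\chi_0^{-1}),$$
which is the asserted identity.

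There is no serious obstacle here beyond careful bookkeeping; the one point that must be handled correctly is the passage from the weights $\chi_i$ of the basis vectors $e_i$ to the weights $\chi_i^{-1}$ of the coordinate functions, together with the choice of which $T$-fixed point is labelled $0$, since these are precisely what determine the sign in $c_1(\chi_1\chi_0^{-1})$. As an independent sanity check I would restrict both sides to the four $T$-fixed points of $\Pro^1\times\Pro^1$: off the diagonal, say at $([e_0],[e_1])$, the restriction of $\xi_1+\xi_2+c_1(\chi_0\chi_1)$ vanishes, while at the diagonal point $([e_0],[e_0])$ it equals $c_1(\chi_1\chi_0^{-1})$, which is exactly the equivariant Euler class of the normal bundle $T_{[e_0]}\Pro^1$; matching these restrictions confirms the computation.
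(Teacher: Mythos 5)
Your proof is correct and follows essentially the same route as the paper: both arguments reduce the identity to Lemma \ref{lem.Thypersurface}, using that the diagonal and the union of the two rulings are cut out by $T$-eigenforms of bidegree $(1,1)$ whose characters differ by $\chi_1\chi_0^{-1}$. The paper compares the two eigenforms $x_0y_1-x_1y_0$ and $x_0y_0$ directly, whereas you compute $[\Delta]$, $[0\times\Pro^1]$, and $[\Pro^1\times 0]$ separately in terms of $\xi_1,\xi_2$ and then substitute, which is only a bookkeeping difference.
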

\begin{proof}
Let $x_0,x_1$ (resp. $y_0,y_1$) be coordinate functions on the first (resp. second) copy of $\Pro^1$.
Then $\Delta_{\Pro^1 \times \Pro^1}$ is defined by the homogeneous quadratic form $x_0y_1 - x_1 y_0$
while $[0 \times \Pro^1] + [\Pro^1 \times 0]$ is defined by the homogeneous quadratic form
$x_0y_0$. Now $x_0y_1 - x_1 y_0$ is in the $\chi_0^{-1}\chi_1^{-1}$-eigenspace of $V^* \otimes V^*$
and $x_0y_0$ while is in the $\chi_0^{-2}$ eigenspace. The formula now follows from
the same argument used in Lemma \ref{lem.Thypersurface}.
\end{proof}

\section{Arsie and Vistoli's presentation $\Hcal_g$ as quotient stack}
To reduce the computation of the $A^*(\Hcal_g)$ to a calculation 
in equivariant intersection theory we recall the presentation for
$\Hcal_g$ for $g$ even given by Arsie and Vistoli in \cite{ArVi:04}.

Let $k$ be a field of characteristic not equal
2 and assume that $g$ is an even integer. Let $E$ be the defining
representation of $\GL_2$ and let ${\mathcal D} \colon \GL_2 \to
{\mathbb G}_m$ be the determinant.

\begin{thm}\cite[Corollary 4.7]{ArVi:04} \label{presHg}
If $g$ is even the stack $\Hcal_g$ is isomorphic to the quotient
$$
\left[ \left( \Sym^{2g+2} E^* \otimes {\mathcal D}^{\otimes g} 
\smallsetminus \Delta_1 \right) / \GL_2 \right]
$$
where $\Delta_1$ is the closed subvariety of singular forms.
\end{thm}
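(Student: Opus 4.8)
The plan is to realize $\Hcal_g$ as a stack of double covers and then repackage the branch data as a point of a $\GL_2$-representation. Given a family of smooth hyperelliptic curves $C \to S$ of genus $g$, the hyperelliptic involution $\sigma$ is canonical (for $g \geq 2$ it is the unique involution whose quotient has genus $0$), so it glues to a relative involution and $P := C/\sigma \to S$ is a smooth genus-$0$ fibration, i.e. a Brauer--Severi curve. The induced map $\pi \colon C \to P$ is finite flat of degree $2$, so $\pi_*\Oc_C = \Oc_P \oplus L^{-1}$ for a line bundle $L$ on $P$, and the algebra structure is encoded by a section $s \in H^0(P, L^{\otimes 2})$ whose zero scheme is the reduced branch divisor. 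A genus count shows the branch divisor has relative degree $2g+2$ and hence $L$ has relative degree $g+1$. First I would check that $C \mapsto (P,L,s)$ is functorial in $S$ and that $C$ is recovered from $(P,L,s)$, reducing the problem to identifying the stack of such triples.

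Second, I would exploit the parity of $g$ in two essential places. Because $g$ is even, $L$ has \emph{odd} relative degree $g+1$; since the Brauer class of a conic is $2$-torsion, the existence of a line bundle of odd relative degree forces this class to vanish, so $P \cong \Pro(\Fc)$ for a rank-$2$ bundle $\Fc$ on $S$. This is precisely what makes the structure group $\GL_2 \cong \Aut(\Fc)$ rather than $\PGL_2$. Writing $L = \Oc_{\Pro(\Fc)}(g+1) \otimes p^*M$ and pushing forward along $p \colon \Pro(\Fc) \to S$ gives $s \in H^0(S, \Sym^{2g+2}\Fc^* \otimes M^{\otimes 2})$. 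To eliminate the auxiliary $M$ I would again use that $g$ is even: the line bundle $(\det\Fc)^{\otimes g}$ has the \emph{canonical} square root $(\det\Fc)^{\otimes g/2}$, so taking $M = (\det\Fc)^{\otimes g/2}$ is canonical and places the branch section in
$$ s \in H^0\!\left(S,\ \Sym^{2g+2}\Fc^* \otimes (\det\Fc)^{\otimes g}\right). $$

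By the associated-bundle construction this says exactly that $(\Fc, s)$ defines a morphism $S \to [\,\Sym^{2g+2}E^* \otimes \Dcal^{\otimes g}/\GL_2\,]$, where $E$ is the defining representation and $\Dcal = \det$. The locus $\Delta_1$ of non-squarefree forms is precisely the branch sections with non-reduced zero scheme, i.e. those for which $C$ acquires a singularity, so smoothness of $C$ corresponds to $s \notin \Delta_1$. As a consistency check on the automorphisms, the scalars $\Gm \subset \GL_2$ act on $\Sym^{2g+2}E^* \otimes \Dcal^{\otimes g}$ with weight $-(2g+2) + 2g = -2$, so $\mu_2 = \{\pm 1\}$ acts trivially; this reproduces the hyperelliptic involution as a universal automorphism and shows the stabilizers have the expected size.

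The hard part is not the numerology but upgrading this object-level correspondence to an \emph{equivalence of stacks}. One must show the two constructions are mutually quasi-inverse in arbitrary families, that every family of hyperelliptic curves is étale-locally of this form, and --- most delicately --- that the rank-$2$ bundle $\Fc$ can be recovered from $C$ in a way compatible with descent: the cover determines $L$ and the Hodge bundle $\Sym^{g-1}\Fc^*$, but not $\Fc$ itself by a naive root extraction, so the canonicity of $\Fc$ and the matching of $\Aut(C)$ with the $\GL_2$-stabilizer of $s$ (including $\sigma$, $L$, and the chosen square root) must be argued directly. Verifying this compatibility of automorphisms and descent data is where the real content of \cite{ArVi:04} lies.
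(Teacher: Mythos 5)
The paper does not actually prove Theorem \ref{presHg}: it is quoted from \cite[Corollary 4.7]{ArVi:04}, and the only place the mechanism is recalled is in the proof of Proposition \ref{prop.taut}. Measured against that, your sketch is a correct outline of the same circle of ideas, but packaged differently from Arsie--Vistoli. They first identify $\Hcal_g$ with the stack of triples $(P \to S, \Lc, \Lc^{\otimes 2} \hookrightarrow \Oc_P)$ and then \emph{rigidify}: the stack of such triples equipped with an isomorphism $(P,\Lc) \cong (\Pro^1_S, \Oc(-g-1))$ is represented by $\A^{2g+3} \smallsetminus \Delta_1$, the forgetful map is a $\GL_2/\mu_{g+1}$-torsor, and evenness of $g$ enters once, through the isomorphism $\GL_2/\mu_{g+1} \cong \GL_2$, $A \mapsto (\det A)^{g/2}A$, which is precisely what converts the standard action on binary forms into $\Sym^{2g+2}E^* \otimes \Dcal^{\otimes g}$. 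You instead stay with the pair $(\Fc, s)$ and normalize the determinant twist directly; your two uses of parity (a line bundle of odd relative degree $g+1$ kills the $2$-torsion Brauer class, so $P \cong \Pro(\Fc)$; and $(\det \Fc)^{\otimes g}$ has the canonical square root $(\det\Fc)^{\otimes g/2}$) are exactly the two places parity enters their argument, and your stabilizer check (scalars act with weight $-2$, so $\mu_2$ survives, matching the hyperelliptic involution) is correct. Your route explains transparently where the twist $\Dcal^{\otimes g}$ comes from; the torsor route has the advantage that the stack-level verifications you defer reduce to a single representability statement, \cite[Theorem 4.1]{ArVi:04}.

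One step you assert without argument, and it is the crux of your version: that ``taking $M = (\det\Fc)^{\otimes g/2}$ is canonical.'' A priori $\Fc$ is only determined up to a twist, so this normalization is a condition on $\Fc$, not a free choice. The check is short but necessary: with the paper's convention $\Pro(V) = \Proj(\Sym V^*)$, replacing $\Fc$ by $\Fc \otimes N$ replaces $M$ by $M \otimes N^{\otimes(g+1)}$ while $(\det\Fc)^{\otimes g/2}$ changes by $N^{\otimes g}$, so the equation $M \otimes N^{\otimes (g+1)} = (\det\Fc)^{\otimes g/2} \otimes N^{\otimes g}$ is linear in $N$ and has the \emph{unique} solution $N = (\det\Fc)^{\otimes g/2} \otimes M^{-1}$ --- no root extraction needed, though note the computation is convention-sensitive (with the opposite variance one would be asking for a $(2g+1)$-st root, which need not exist). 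Better still for functoriality and descent, define $\Fc$ outright as the dual of $p_*(\omega_{P/S}^{\otimes g/2} \otimes L)$, a rank-two pushforward since $\omega_{P/S}^{\otimes g/2} \otimes L$ has relative degree $1$; up to dualization conventions this is exactly the bundle ${\mathcal V}_g$ that Proposition \ref{prop.taut} matches with the defining representation $E$, so your construction dovetails with the paper's Section \ref{sec.taut}. Two small points: the splitting $\pi_*\Oc_C = \Oc_P \oplus L^{-1}$ and the uniqueness of the hyperelliptic involution both use $\Char k \neq 2$, which should be stated; and since the paper itself only cites \cite{ArVi:04} here, your concluding deferral of the family-level equivalence to that reference is consistent with what the paper does.
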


By Proposition \ref{prop.chowstack} the Chow ring of $\Hcal_g$
may identified with the equivariant Chow ring
$$
A^*_{\GL_2}(\Sym^{2g+2}E^* \otimes {\mathcal D} \smallsetminus \Delta_1).
$$
and the remainder of the paper is devoted to performing this
computation.

Let $\Pro^{2g+2} = \Pro(\Sym^{2g+2}E^*)$ be the projective space of
forms of degree $2g+2$ and again let $\Delta_1$ be the hypersurface corresponding
to singular forms. Following the argument of Vistoli \cite{Vis:98}
we may reduce to a calculation in $\Pro^{2g+2}$.
\begin{lemma} \label{lem.reduction}
Let $\xi$ denote the first Chern class of $\Oc_{\Pro^{2g+2}}(1)$. Then
the pull-back
$$
A^*_{\GL_2}(\Pro^{2g+2} \backslash \Delta_1) \xrightarrow{\Pi^*} A^*_{\GL_2}(\Sym^{2g+2}E\otimes
{\mathcal D}^{\otimes g} \backslash \Delta_1)
$$
is surjective and its kernel is generated by $gc_1-\xi$.
\end{lemma}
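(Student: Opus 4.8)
The plan is to recognize $\Pi$ as the projection of a $\Gm$-bundle and to apply the equivariant version of the basic exact sequence to the complement of the zero section of a line bundle. Write $V = \Sym^{2g+2}E^* \otimes \mathcal{D}^{\otimes g}$, so that $\Pro^{2g+2} = \Pro(V)$ as $\GL_2$-spaces and $\Pi \colon V \smallsetminus \Delta_1 \to \Pro^{2g+2} \smallsetminus \Delta_1$ sends a squarefree form to the line it spans. Since the fiber of $\Pi$ over $[v]$ is the punctured line $k^*\cdot v$, and since $\Delta_1$ is a cone, the source is identified $\GL_2$-equivariantly with the complement of the zero section in the total space of the tautological bundle $\Oc_{\Pro(V)}(-1)$, restricted over $\Pro^{2g+2} \smallsetminus \Delta_1$; under this identification $\Pi$ is exactly the bundle projection.

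First I would record the general fact that for a smooth $G$-space $Y$ and a $G$-equivariant line bundle $L \to Y$ with complement of the zero section $L^\circ \xrightarrow{p} Y$, the flat pullback $p^*$ fits into an exact sequence
$$A^{*-1}_G(Y) \xrightarrow{\;\cdot\, c_1^G(L)\;} A^*_G(Y) \xrightarrow{\;p^*\;} A^*_G(L^\circ) \to 0.$$
This is the equivariant form of the basic exact sequence: $p^*$ is surjective because $L^\circ$ is open in the total space of $L$, whose equivariant Chow ring is $A^*_G(Y)$ by homotopy invariance, and the kernel is the image of pushforward along the zero section, which by the self-intersection formula is multiplication by $c_1^G(L)$. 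Applying this with $Y = \Pro^{2g+2}\smallsetminus\Delta_1$ and $L = \Oc_{\Pro(V)}(-1)$ shows at once that $\Pi^*$ is surjective and that its kernel is the principal ideal generated by $c_1^{\GL_2}(\Oc_{\Pro(V)}(-1))$, equivalently by $c_1^{\GL_2}(\Oc_{\Pro(V)}(1))$.

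It then remains to identify this equivariant Chern class, and here the only genuine subtlety is the character twist. The underlying projective space of $V = \Sym^{2g+2}E^* \otimes \mathcal{D}^{\otimes g}$ coincides with that of $\Sym^{2g+2}E^*$, but the natural $\GL_2$-linearizations of the tautological bundle differ by the character $\mathcal{D}^{\otimes g}$: since the trivial bundle with fiber $V$ equals the one with fiber $\Sym^{2g+2}E^*$ tensored by $\mathcal{D}^{\otimes g}$, one gets $\Oc_{\Pro(V)}(-1) \cong \Oc_{\Pro^{2g+2}}(-1)\otimes \mathcal{D}^{\otimes g}$, hence $\Oc_{\Pro(V)}(1) \cong \Oc_{\Pro^{2g+2}}(1)\otimes (\mathcal{D}^{\otimes g})^{-1}$. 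Therefore
$$c_1^{\GL_2}(\Oc_{\Pro(V)}(1)) = \xi - g\,c_1(\mathcal{D}) = \xi - g c_1,$$
using that $c_1(\mathcal{D}) = c_1(\det E) = c_1(E) = c_1$ in $A^*_{\GL_2} = \Z[c_1,c_2]$. Thus the kernel of $\Pi^*$ is generated by $\xi - g c_1$, that is, by $g c_1 - \xi$, as claimed.

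The step requiring the most care is the bookkeeping in the last paragraph: tracking the sign and the exact coefficient $g$ produced by the twist through the conventions $\Pro(V) = \Proj(\Sym V^*)$ and $\Oc_{\Pro(V)}(-1)\subset V\times\Pro(V)$, and confirming $c_1(\mathcal{D}^{\otimes g}) = g c_1$ rather than $-gc_1$ or $2g c_1$. Everything else is formal once the identification of $V \smallsetminus \Delta_1$ as the complement of the zero section of a $\GL_2$-equivariant line bundle is in place.
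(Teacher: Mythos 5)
Your proof is correct and follows essentially the same route as the paper's: both identify $\Pi$ as the complement of the zero section of the $\GL_2$-equivariant line bundle $\Oc_{\Pro^{2g+2}}(-1)\otimes \mathcal{D}^{\otimes g}$ and apply the equivariant localization (basic) exact sequence, so the kernel is the principal ideal generated by $c_1$ of that bundle, namely $gc_1-\xi$. Your extra bookkeeping on the character twist and the sign is exactly the content the paper leaves implicit, and your conclusion agrees with it.
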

\begin{dem}
In general if $\Pi: X \to Y$ is a $G$-equivariant $\Gm$-bundle we consider the associated line bundle $\mathcal L$. Then $X$ is the complement of the 0-section and the localization exact sequence for equivariant Chow groups implies that
$$
A^*_G(Y)/c_1(\mathcal L) \cong A^*_G(X)
$$ 
where the isomorphism is induced by $\Pi^*$.  In our case the
associated line bundle is ${\mathcal D}^{\otimes g} \otimes \mathcal O
(-1)$ so the kernel of $\Pi^*$ is $gc_1- t$.
\end{dem}

From Lemma \ref{lem.reduction} we conclude that
\begin{equation}\label{quot}
A^*{\Hcal_g}=A^*_{\GL_2}(\Pro^{2g+2} \backslash \Delta_1)/(gc_1-\xi)
\end{equation}
so we have reduced the problem to computing
\begin{equation*}
A^*_{\GL_2}(\Pro^{2g+2} \backslash \Delta_1) = A^*_{\GL_2}(\Pro^{2g+2})/I
\end{equation*}
where $I$ is the ideal generated by the image of $A^{\GL_2}_*(\Delta_1)$.

\section{The equivariant Chow ring of the space of non-degenerate
homogeneous forms in $N$ variables}
Let $E$ be the defining representation of $\GL_2$
and let $\Pro^N = \Pro(\Sym^N E^*)$ be the 
projective space of homogeneous forms of degree $N$ in two variables $x_0$ and $x_1$. Since 
the action of $\GL_2$ commutes with homotheties,
there is an induced action of $\GL_2$ on $\Pro^N$
with kernel the center of $\GL_2$.

Let $\Delta_1 \subset \Pro^N$ be the locus of forms
which are divisible by a square over some
extension of the base field. This subvariety is $\GL_2$ invariant
and the goal of this section is to compute the 
equivariant Chow ring $A^*_{\GL_2}(\Pro^N - \Delta_1) =
A^*_{\GL_2}(\Pro^N)/I$ where $I$ is the
ideal in $A^*_{\GL_2}(\Pro^N)$ generated by the image of the equivariant
Chow groups of $\Delta_1$. We can then apply the results 
of this section when $N = 2g+2$ to complete the computation 
of the integral Chow ring of ${\mathcal H}_{g}$ for $g$ even.
(Note, however, that in this section we do not require $N$ to be even.)

For every $r=1, \dots, [N/2]$ we
may define $\Delta_r \subset \Sym^{N} E^*$ as the closed subvariety of forms
divisible by the square of a polynomial of degree $r$ over
some extension of the ground field $k$. The locus $\Delta_r$
is the image of the map
\begin{eqnarray*}
\pi_r: \Sym^r E^* \times \Sym^{N-2r}E^* &\to& \Sym^{N}E^*\\
(f,g) &\mapsto& f^2g
\end{eqnarray*}
Passing to the associated projective spaces we obtain $\GL_2$-equivariant maps
\begin{equation}\label{eq.pir}
\pi_r: \Pro^r \times \Pro^{N-2r} \to \Pro^{N}
\end{equation}
and $\Delta_r$ will still indicate the images of $\pi_r$

\begin{prop} (cf. \cite[ Lemma 3.3]{Vis:98}) \label{prop.stratpn}
If the characteristic of $k> (N-2)$ 
then the image of $A_*^{\GL_2}(\Delta_1)$ in $A^*_{\GL_2}(\Pro^N)$
is the sum of the images of the homomorphisms
$$\pi_{r*}\colon A_*^{\GL_2}(\Pro^r \times \Pro^{N-2r}) \to A^*_{\GL_2}(\Pro^N)$$
\end{prop}
\begin{proof}
Let $G$ be an algebraic group. Recall \cite{EdGr:98} that if $X$ is a $G$-scheme then an {\em equivariant envelope}
is a proper $G$-equivariant morphism $f \colon \tilde{X} \to X$
such that for every $G$-invariant subvariety $W \subset X$ there
is a $G$-invariant subvariety $\tilde{W} \subset \tilde{X}$ mapping
birationally to $X$. Lemma 3 of \cite{EdGr:98} together with \cite[Lemma 18.3(6)]{Ful:84}
implies that proper pushforward $f_* \colon A_*^G(\tilde{X}) \to A_*^G(X)$ is
surjective.

Let $$\pi \colon \coprod_{r=1}^{N/2} \Pro^r \times \Pro^{N-2r} \to
\Delta_1$$ be the $\GL_2$-equivariant map whose restriction to $\Pro^r
\times \Pro^{N-2r}$ is $\pi_r$. By the above discussion it suffices to prove
that $\pi$ is an equivariant envelope. The argument used in the proof
\cite[Lemma 3.2]{Vis:98} shows that if $\Char k > N-2r$, then for every
field $K \supset k$, if $p$ is $K$-valued point of $\Delta_r
\smallsetminus \Delta_{r+1}$ then there is a (unique) $K$-valued point
$q$ of $\Pro^r \times \Pro^{N-2r}$ mapping to $p$.

Let $Z \subset \Delta_{1}$ be a $G$-invaraint subvariety and suppose
that the generic point $p$ of $Z$ lies in $\Delta_{r} \smallsetminus
\Delta_{r+1}$. By Vistoli's Lemma we know that there is subvariety
$\tilde{Z} \subset \Pro^r \times \Pro^{N-2r}$ mapping birationally to
$Z$. To complete the proof we must show that we may take $\tilde{Z}$
to be $G$-invariant. Now if $g \in \GL_2$ then the subvariety
$g\tilde{Z}$ also maps birationally to $V$. Since there is a unique
point of $\Pro^r \times \Pro^{N-2r}$ mapping to the generic point $q$,
it follows that $g\tilde{Z}$ and $\tilde{Z}$ have the same generic
point. Hence $\tilde{Z}$ must contain a $G$-invariant open set
$\tilde{U}$. Taking the closure of $\tilde{U}$ in $\Pro^r \times
\Pro^{N-2r}$ gives our desired $G$-invariant subvariety mapping
birationally to $Z$.
\end{proof}

For any $r$ let $\xi_{r,1}$ (respectively $\xi_{N-2r,2}$) be the pullback to
$\Pro^r \times \Pro^{N-2r}$ of $c^{\GL_2}_1(\Oc(1))$ on the first
(resp. second) factor. We have the following relation in
$A^*_{\GL_2}(\Pro^r \times \Pro^{N-2r})$:
$$
\pi^*_r(\xi)=2 \xi_{r,1} + \xi_{N-2r,2}.
$$
Moreover, from Lemma \ref{lem.projspace} we have that $\xi_{1,r}$ is a
zero of a monic polynomial of degree $r+1$ with coefficients in
$A^*_{\GL_2}$. Therefore $\pi_{r*}(A^*_{\GL_2}(\Pro^r \times
\Pro^{N-2r}))$ is generated as a $A^*_{\GL_2}(\Pro^{N})$-module
by $\pi_{r*}(1), \pi_{r*}(\xi_{1,r}), \dots, \pi_{r*}(\xi^r_{1,r})$.

For $r=1, \dots, [N/2]$ and $i=0, \dots, r$ set
$$
\alpha_{r,i}:= \pi_{r*}(\xi^i_{r,1}).
$$
The above discussion allows us to conclude that
\begin{equation*}
A^*_{\GL_2}(\Pro^{N} \backslash \Delta_1) = A^*_{\GL_2}(\Pro^{N})/(\{\alpha_{r,i}\})
\end{equation*}

\subsection{The ideal generated by $A^*_T(\Delta_1)$ in
$A^*_T(\Pro^N)$}
The action of $\GL_2$ on $\Pro^N$ restricts to 
an action of the maximal torus $T \subset \GL_2$
consisting of diagonal matrices. The goal of this
section is to prove 

\begin{prop} \label{prop.deg}
The image of $A_*^T(\Delta_1)$ in $A^*_T(\Pro^N)$ is
the ideal $(\alpha_{1,0}, \alpha_{1,1})$.
\end{prop}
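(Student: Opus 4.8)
The plan is to prove both inclusions. The inclusion $(\alpha_{1,0},\alpha_{1,1})\subseteq\Image\big(A_*^T(\Delta_1)\big)$ is immediate, since $\alpha_{1,0}=\pi_{1*}(1)$ and $\alpha_{1,1}=\pi_{1*}(\xi_{1,1})$ already lie in the image of $\pi_{1*}$. For the reverse inclusion I would first record the $T$-analogue of Proposition \ref{prop.stratpn}: the equivariant envelope argument given there uses only properness of the $\pi_r$ and the uniqueness of the point of $\Pro^r\times\Pro^{N-2r}$ lying over a generic point of $\Delta_r\smallsetminus\Delta_{r+1}$, neither of which is special to $\GL_2$, so it applies verbatim to the torus $T$. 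Combined with the module description following that proposition (using $\pi_r^*\xi=2\xi_{r,1}+\xi_{N-2r,2}$ and the fact that $\xi_{r,1}$ satisfies a monic polynomial of degree $r+1$ over $A^*_T$ by Lemma \ref{lem.projspace}), this identifies $\Image\big(A_*^T(\Delta_1)\big)$ with the ideal generated by all the classes $\alpha_{r,i}=\pi_{r*}(\xi_{r,1}^i)$, for $1\le r\le[N/2]$ and $0\le i\le r$. Everything then reduces to showing $\alpha_{r,i}\in(\alpha_{1,0},\alpha_{1,1})$ for each such $r,i$.

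The classes with $i\ge 1$ are the easy ones. I would use the multiplication map $\nu\colon\Pro^1\times\Pro^{r-1}\to\Pro^r$, $(\ell,f')\mapsto\ell f'$, together with $m=\nu\times\mathrm{id}$ and $\phi(\ell,f',g)=(\ell,f'^2g)$, which satisfy $\pi_r\circ m=\pi_1\circ\phi$. A direct computation (using the coordinate subspaces of forms divisible by a fixed product of eigen-linear-forms) shows that $\xi_{r,1}^i$ lies in the image of $\nu_*$ for every $1\le i\le r$; for instance $\nu_*(\eta\,\zeta^{\,i-1})$ has leading term $\xi_{r,1}^i$, where $\eta,\zeta$ are the hyperplane classes on the two factors. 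Hence $\xi_{r,1}^i\otimes 1\in\Image(m_*)$, and the identity $\pi_{r*}m_*=\pi_{1*}\phi_*$ gives $\alpha_{r,i}=\pi_{r*}(\xi_{r,1}^i)\in\Image(\pi_{1*})=(\alpha_{1,0},\alpha_{1,1})$, as the projection formula shows $\Image(\pi_{1*})$ is exactly the ideal generated by $\pi_{1*}(1)$ and $\pi_{1*}(\xi_{1,1})$.

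The main obstacle is the pure stratum class $\alpha_{r,0}=[\Delta_r]$ for $r\ge 2$. The difficulty is that a generic form in $\Delta_r$ has $r$ interchangeable double roots, so every natural correspondence relating $\Delta_r$ to $\Delta_1$ (peeling off a linear factor, or the double-point locus of the birational map $\pi_1$) is $r$-to-one onto $\Delta_r$ and therefore only places the multiple $r\,[\Delta_r]$ in the ideal. To reach $[\Delta_r]$ itself I would argue by induction on $r$ (for all $N$ at once). The key geometric input is that the locus $\Delta_r^{x_0}$ of forms in $\Delta_r$ having a double root at the $T$-fixed point $x_0$ is cut out inside the coordinate subspace $\{x_0^2\mid h\}\cong\Pro^{N-2}$ --- whose fundamental class is exactly $\alpha_{1,1}$ --- by the stratum $\Delta_{r-1}$ of the smaller projective space. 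Since the restriction $A^*_T(\Pro^N)\to A^*_T(\Pro^{N-2})$ is surjective, pushing the inductive hypothesis forward along this linear inclusion and applying the projection formula shows $[\Delta_r^{x_0}]\in(\alpha_{1,1})$. On the other hand, $\Delta_r^{x_0}$ is the image under $\pi_{r*}$ of the coordinate hyperplane $\{x_0\mid f\}\subset\Pro^r$, whose class is $\xi_{r,1}+c_1(\chi)$ by Lemma \ref{lem.Thypersurface}, so $[\Delta_r^{x_0}]=\alpha_{r,1}+c_1(\chi)\,\alpha_{r,0}$ with $\alpha_{r,1}\in(\alpha_{1,0},\alpha_{1,1})$.

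Combining these gives $c_1(\chi)\,\alpha_{r,0}\in(\alpha_{1,0},\alpha_{1,1})$, and running the same argument at the other fixed point $x_1$ produces a second relation of this shape. The weight $c_1(\chi)$ unfortunately carries the same spurious factor $r$, so these relations alone only recover integer multiples of $[\Delta_r]$; the collision of two double roots, which is governed by the diagonal class computed in Lemma \ref{lem.p1diag}, is what I expect to supply the additional relation needed to clear this multiplicity and conclude that $\alpha_{r,0}$ itself lies in $(\alpha_{1,0},\alpha_{1,1})$. I anticipate that this final multiplicity bookkeeping is the genuinely technical step --- everything preceding it is formal.
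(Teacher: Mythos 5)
Your reduction to the classes $\alpha_{r,i}$ is fine: the envelope argument of Proposition \ref{prop.stratpn} does apply verbatim to $T$ (under the same characteristic hypothesis), and this is exactly the implicit first step of the paper's proof. Your treatment of the classes with $i\geq 1$ via the factorization $\pi_r\circ(\nu\times\mathrm{id})=\pi_1\circ\phi$ is a sound idea and is genuinely different in style from the paper (which instead works with the hyperplane classes $h_{i,r}$ and shows the classes $\beta_{r,m}$ are multiples of $h_0h_1$), but as written it is loose: $\nu_*$ of a $T$-invariant point times a coordinate subspace is the class of a coordinate subspace of $\Pro^r$, i.e.\ a product $\prod_j(\xi_{r,1}+\text{weights})$, not $\xi_{r,1}^i$ itself, so "has leading term $\xi_{r,1}^i$" does not by itself give $\xi_{r,1}^i\in\Image(\nu_*)$; one must absorb the lower-order terms (using that $\Image(\nu_*)$ is an ideal containing $\nu_*(1)=r$ and that the relevant constant terms are divisible by $r$, or by observing that these triangular classes generate the same $A^*_T(\Pro^N)$-module as the powers $\xi_{r,1}^i$ together with $\alpha_{r,0}$). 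Similarly, the class of the coordinate subspace $\{x_0^2\mid h\}$ is $h_{N-1}h_N$, the Weyl conjugate of $h_0h_1$, not $\alpha_{1,1}$; it does lie in the ideal (which is Weyl-stable), so this is only a local error. All of this is repairable.

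The genuine gap is the case you yourself flag as the crux: $\alpha_{r,0}\in(\alpha_{1,0},\alpha_{1,1})$ for $r\geq 2$ is never proved, only hoped for. And the partial relations your fixed-point construction produces cannot be parlayed into it. What you obtain is $[\Delta_r^{x_0}]=\alpha_{r,1}-rt_2\,\alpha_{r,0}$ together with $[\Delta_r^{x_0}]\in(h_{N-1}h_N)\subseteq I$, where $I=(\alpha_{1,0},\alpha_{1,1})$; hence $rt_2\,\alpha_{r,0}\in I$, and symmetrically $rt_1\,\alpha_{r,0}\in I$. Formally this says only that the colon ideal $(I:\alpha_{r,0})$ contains $I+(rt_1,rt_2)$, and that ideal is generated by elements of positive degree, hence proper; so no combination of these relations can ever yield $1\in(I:\alpha_{r,0})$, i.e.\ $\alpha_{r,0}\in I$. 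The obstruction is not just the integer $r$ (which torsion-freeness could cancel) but the positive-degree factors $t_i$, which can never be divided out. What the paper does instead is manufacture an identity of exactly the right shape: it writes $r!(N-2r)!\,\alpha_{r,0}=\rho_*\left[(\Delta_{\Pro^1\times\Pro^1})^r\times(\Pro^1)^{N-2r}\right]$ for the $S_N$-equivariant multiplication map $\rho\colon(\Pro^1)^N\to\Pro^N$, expands each diagonal factor by Lemma \ref{lem.p1diag}, and verifies --- via the binomial divisibility bookkeeping and the key identity \eqref{eq.th0}, $(N-1)(N-2)h_0(t_1-t_2)=h_0\alpha_{1,0}-2(N-1)h_0h_1$, which is precisely what absorbs the weight-times-class terms you are stuck with --- that every term on the right is $r!(N-2r)!$ times an element of $(\alpha_{1,0},h_0h_1)$, so that torsion-freeness of $A^*_T(\Pro^N)$ cancels the factorial (Lemmas \ref{lem.alpha20} and \ref{lem.alphar0}). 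Your closing appeal to "collision of two double roots" points in the right direction --- the diagonal expansion is indeed the mechanism --- but carrying it out is the technical heart of the proposition, not residual bookkeeping, so the proposal as it stands does not establish the statement.
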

(Here we use the notation $\alpha_{r,i}$ to indicate the
restriction of the same named classes to $A^*_T(\Pro^N)$.)

The proof will require the introduction of alternate
but, less symmetric classes which generate the same ideal.
The group $T$ acts on $\Pro^1$ by
$z\cdot (a\colon b) = (\lambda_1^{-1}(z) a \colon \lambda_2^{-1}(z) b)$.  
Choose coordinates
$(X_0: X_1 \colon \ldots \colon X_N)$ on $\Pro^N$ so that the
coordinate function $X_i$ is the coefficient of $x_0^{N-i}x^i_1$ in
a homogeneous form of degree $N$. Then $T$ acts on $\Pro^N$ by the rule
\begin{equation*}
z\cdot (X_0  \colon X_1 \colon \ldots
\colon X_{N-1} \colon X_N) = 
(\lambda_1^{-N}(z) X_0 \colon \lambda_1^{1-N}\lambda_2^{-1}(z) \colon
\ldots \colon \lambda_1^{-1}  \lambda_2^{1-N}(z) X_{N-1} \colon 
\lambda_2^{-N}(z)X_N).
\end{equation*}

Let $H_i \subset \Pro^N$  be the hyperplane defined by
the equation $X_i = 0$; in other words, $H_i$ corresponds
to forms $f$ such that the coefficient of $x_0^{N-i}x_1^i$ is
$0$. Let $h_i$ be the $T$-equivariant fundamental class of $H_i$.

By Lemma \ref{lem.Thypersurface}, we have that 
\begin{equation} \label{eq.hi}
\begin{array}{lcl}
h_i & = & c_1({\mathcal O}(1)) + c_1(\lambda_1^{i-N}\lambda_2^{-i})\\
& = & \xi - (N-i)t_1 - it_2
\end{array}
\end{equation}
On $\Pro^r$ we can consider the hyperplane class $h_{i,r}$
corresponding to degree $r$ forms such that the coefficient
of $x_0^{r-i}x^i_1$ is 0. 
Again by Lemma \ref{lem.Thypersurface} we have
that 
$h_{i,r} = \xi_{1,r} - (r-i)t_1 - it_2$.
It follows that 
the image of $A_*^T(\Delta_1)$ is generated by 
the pushforwards to $A^*_T(\Pro^N)$ of the classes $\pi_{r*}(h_{0,r} \ldots h_{m,r})$
for $1 \leq r \leq [N/2]$, $0 \leq m \leq r-1$
as well as the classes
$\alpha_{r,0}$.

\begin{lemma} \label{lem.alpha10alpha11}
In $A^*_T(\Pro^N)$ we have the relations
\begin{equation} \label{eq.alpha10}
\begin{array}{lcl}
\alpha_{1,0} & = & 2(N-1)h_0 + N(N-1)(t_1 -t_2) \\ 
& = & 2(N-1)\xi  - N(N-1)c_1
\end{array}
\end{equation}
\begin{equation} \label{eq.alpha11}
\begin{array}{lcl}
\alpha_{1,1} & = & h_0h_1 + t_1 \alpha_{1,0} \\ 
& = &  \xi^2 - c_1 \xi  - N(N-2) c_2 
\end{array}
\end{equation}
\end{lemma}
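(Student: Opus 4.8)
The plan is to compute the two pushforwards $\alpha_{1,0}=\pi_{1*}(1)$ and $\alpha_{1,1}=\pi_{1*}(\xi_{1,1})$ by reading off the geometry of $\pi_1\colon \Pro^1\times\Pro^{N-2}\to\Pro^N$, and then to pass to the closed forms in $\xi,c_1,c_2$ by the substitutions $h_0=\xi-Nt_1$, $h_1=\xi-(N-1)t_1-t_2$, $c_1=t_1+t_2$, $c_2=t_1t_2$ (the last two because $c_1,c_2$ are the elementary symmetric functions of $t_1,t_2$). In each of the two displayed relations the first line is the genuine geometric output and the second line is a routine algebraic rewriting; I would prove the first line and then expand.

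First I would treat $\alpha_{1,0}$. A general form with exactly one double root is written uniquely as $f^2g$ with $\deg f=1$, so under the standing characteristic hypothesis of the section $\pi_1$ is birational onto its image, the discriminant hypersurface $\Delta_1=\{D=0\}$, whence $\alpha_{1,0}=\pi_{1*}(1)=[\Delta_1]_T$. The discriminant $D$ is homogeneous of degree $2(N-1)$ in the coordinates $X_0,\ldots,X_N$ and is isobaric: every monomial $\prod_i X_i^{e_i}$ occurring in $D$ satisfies $\sum_i e_i=2(N-1)$ and $\sum_i i\,e_i=N(N-1)$. Since the $X_i$ are $T$-eigenfunctions, these two identities force all monomials of $D$ to share one $T$-character, namely $\lambda_1^{N(N-1)}\lambda_2^{N(N-1)}$, so $D$ is a $T$-eigenvector. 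Applying Lemma \ref{lem.Thypersurface} with $d=2(N-1)$ then gives $\alpha_{1,0}=2(N-1)\xi-N(N-1)c_1$, and the first line of \eqref{eq.alpha10} follows upon substituting $h_0=\xi-Nt_1$. (As a consistency check, the coefficient $2(N-1)$ of $\xi$ equals the non-equivariant degree $\int(2\xi_{1,1}+\xi_{N-2,2})^{N-1}$ computed from $\pi_1^*\xi=2\xi_{1,1}+\xi_{N-2,2}$.)

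Next, for $\alpha_{1,1}$ I would use $\xi_{1,1}=h_{0,1}+t_1$ (the $r=1,\ i=0$ instance of the hyperplane formula on $\Pro^1$) together with the projection formula to write $\alpha_{1,1}=\pi_{1*}(h_{0,1})+t_1\alpha_{1,0}$. The class $h_{0,1}$ is the pullback of the point $[x_1]\in\Pro^1$, i.e.\ the class of $\{x_1\}\times\Pro^{N-2}$, and $\pi_1$ sends this divisor by $g\mapsto x_1^2g$ isomorphically onto the locus of forms divisible by $x_1^2$, which is exactly the coordinate subspace $H_0\cap H_1=\{X_0=X_1=0\}$. Hence $\pi_{1*}(h_{0,1})=[H_0\cap H_1]_T=h_0h_1$, which is the first line of \eqref{eq.alpha11}. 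The second line is then a direct expansion of $h_0h_1+t_1\alpha_{1,0}$ in which the $t_1^2$ contributions cancel, leaving $\xi^2-c_1\xi-N(N-2)c_2$.

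The main obstacle is the $\alpha_{1,0}$ step: one must pin down the precise $T$-character of the discriminant and reconcile it with the sign convention $z\cdot f=\chi^{-1}(z)f$ of Lemma \ref{lem.Thypersurface}, which is exactly where the isobaric identity $\sum_i i\,e_i=N(N-1)$ does the work. One must also confirm that the characteristic hypothesis makes both pushforwards occur with multiplicity one, i.e.\ that $\pi_1$ is birational onto the reduced, irreducible $\Delta_1$ and that its restriction to $\{x_1\}\times\Pro^{N-2}$ is an isomorphism onto the reduced $H_0\cap H_1$; the remaining computations are purely formal.
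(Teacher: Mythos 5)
Your proposal is correct, but for $\alpha_{1,0}$ it takes a genuinely different route from the paper. The paper never touches the discriminant polynomial: it computes both classes by pushing forward along the $N$-fold multiplication map $\rho\colon(\Pro^1)^N\to\Pro^N$, expanding the $T$-equivariant class of the diagonal $\Delta_{\Pro^1\times\Pro^1}$ via Lemma \ref{lem.p1diag}, collecting terms using $S_N$-invariance of $\rho$, and then dividing the resulting identity $(N-2)!\,\alpha_{1,0}=2(N-1)!\,h_0+N!\,(t_1-t_2)$ by $(N-2)!$, which is legitimate because $A^*_T(\Pro^N)$ is torsion free; this identity plays the role that birationality of $\pi_1$ plays for you. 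Your discriminant argument is shorter and more conceptual, but it imports classical invariant-theoretic facts: the degree $2(N-1)$, the isobaric weight $N(N-1)$, and --- the one point you only wave at --- that the discriminant $D$ is squarefree, i.e.\ generates the ideal of the reduced $\Delta_1$ in the given characteristic; if $D$ were a proper power $D_0^m$, Lemma \ref{lem.Thypersurface} would yield $m[\Delta_1]_T$ rather than $[\Delta_1]_T$. Fortunately this is patchable from inside your own proposal: Vistoli's lemma (quoted in the proof of Proposition \ref{prop.stratpn}, valid for $\Char k>N-2$) applied to the generic point of $\Delta_1$ gives birationality of $\pi_1$, whence your consistency check $\int(2\xi_{1,1}+\xi_{N-2,2})^{N-1}=2(N-1)$ computes the degree of the \emph{reduced} $\Delta_1$; since this equals $\deg D$ and $D$ vanishes exactly on $\Delta_1$, $D$ is forced to be the reduced equation. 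For $\alpha_{1,1}$ you and the paper agree on the key step ($\xi_{1,1}=h_{0,1}+t_1$ plus the projection formula); you then evaluate $\pi_{1*}h_{0,1}$ directly by noting that $\pi_1$ maps $\{x_1\}\times\Pro^{N-2}$ linearly isomorphically onto $H_0\cap H_1$, which is cleaner than the paper's detour through $\rho$ and a factor of $(N-2)!$. The trade-off overall: the paper's $\rho$-machinery is self-contained (no classical discriminant facts) and is reused verbatim for the higher strata in Lemmas \ref{lem.hrm}, \ref{lem.alpha20}, and \ref{lem.alphar0}, whereas your argument is special to $r=1$ and would not by itself handle those later computations.
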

(Recall that $c_1 = t_1 + t_2$ and $c_2 = t_1 t_2$ are the 
elementary symmetric polynomials in the generators $t_1,t_2$ for
$A^*_T$.)
\begin{remark}
Note that the identities $\alpha_{1,0} = 2(N-1)\xi  - N(N-1)c_1$
and $\alpha_{1,1} = \xi^2 - c_1 \xi  - N(N-2) c_2 $ also hold
in $A^*_{\GL_2}(\Pro^N)$ since the restriction map $A^*_{\GL_2}(\Pro^N)
\to A^*_{T}(\Pro^N)$ is injective.
\end{remark}
\begin{proof}
The multiplication map $\rho \colon (\Pro^1)^N \to  \Pro^N$ is $\GL_2$-equivariant and hence $T$-equivariant. It
also commutes with the natural permutation action of $S_N$
on $(\Pro^1)^N.$
and with this notation the subvariety  $\Delta_1$ 
corresponding to homogeneous forms with multiple roots
is the image of $\Delta_{\Pro^1 \times \Pro^1} \times (\Pro^1)^{N-2}$.

Consider the diagram
\begin{equation} \label{diag.delta1} \begin{array}{ccc} 
({\Delta}_{\Pro^1 \times \Pro^1}) \times (\Pro^1)^{N-2} 
&  \hookrightarrow & (\Pro^1)^N\\
\downarrow \rho_1 & & \downarrow \rho\\
\Delta_r & \stackrel{i_1} \hookrightarrow & \Pro^N
\end{array}
\end{equation}
Where $\rho$ is the multiplication map and $\rho_1$
is its restriction to $\Delta_{\Pro^1 \times \Pro^1} \times (\Pro^1)^{N-2}$.
Since $\rho_1$ has degree $(N-2)! $we see that
\begin{equation} \label{eq.alpha10.3}
(N-2)! \alpha_{1,0} = \rho_*[\Delta_{\Pro^1 \times \Pro^1} \times (\Pro^1)^{N-2}]
\end{equation}

The torus $T$ acts on $\Pro^1$ by $t(x_0 \colon x_1)
= (\lambda_1^{-1} x_0 \colon \lambda_2^{-1}x_1)$ so by Lemma \ref{lem.p1diag} 
\begin{equation} \label{eq.p1diag}
[\Delta_{\Pro^1 \times \Pro^1}] = [0 \times \Pro^1] \times [0 \times \Pro^1]
+ (t_1 -t_2)[\Pro^1 \times \Pro^1].
\end{equation}
Substituting the right hand side of \eqref{eq.p1diag}
into the right hand side of \eqref{eq.alpha10.3}
we obtain
\begin{equation} \label{eq.alpha10.4}
(N-2)!\alpha_{1,0} = \rho_*\left([0 \times \Pro^1 \times (\Pro^1)^{N-1}]
+ [\Pro^1 \times 0 \times (\Pro^1)^{N-1}] + (t_1 -t_2)[(\Pro^1)^N]\right)
\end{equation}
The first two terms on the right hand side of \eqref{eq.alpha10.4}
pushforward to $(N-1)! h_0$ since the map $0 \times (\Pro^1)^{N-1}
\to H_0$ has degree $(N-1)!$ while the direct image of the second term is
$N! (t_1 -t_2)$.
Thus we obtain the relation
\begin{equation} \label{eq.alpha10.5}
(N-2)!\alpha_{1,0} = 2(N-1)!h_0 + N!(t_1 -t_2)
\end{equation}
Since $A^*_T(\Pro^N)$ is torsion free we can divide
\eqref{eq.alpha10.5} by $(N-2)!$ to obtain the first identity in
\eqref{eq.alpha10} and 
substituting $h_0 = \xi - N t_1$ yields the second.

As noted above we have  $h_{0,1} = \xi_{1,1} - t_1$ in $A^*_T(\Pro^1)$.
Thus $\alpha_{1,1} = \pi_{1*}h_{0,1} + t_1[\Delta_1]$.
Now $(N-2)! h_{0,1} = \rho_{1*}([0 \times 0 \times \Pro^{N-2}])$. Thus,
after pushing forward to $\Pro^N$ we obtain the identity
\begin{eqnarray*}
(N-2)! i_{1*}\pi_{1*} h_{0,1} & = & (N-2)! 
\rho_*[0 \times 0 \times (\Pro^1)^{N-2}]\\
& = & (N-2)!h_0h_1
\end{eqnarray*}
Since $i_{1*} [\Delta_1] = \alpha_{1,0}$ the first identity
in \eqref{eq.alpha11} follows. Substituting $h_0 = \xi  -Nt_1$
and $h_1 = \xi + (1-N)t_1 - t_2$ yields the second.
\end{proof}

As an immediate consequence of the identities in Lemma \ref{lem.alpha10alpha11} we obtain.
\begin{lemma}
$(\alpha_{1,0} , \alpha_{1,1}) = (\alpha_{1,0}, h_0h_1)$ as
ideals in $A^*_T(\Pro^N)$.
\end{lemma}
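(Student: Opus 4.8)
The plan is to read off the result directly from the first identity in \eqref{eq.alpha11}, which was established in Lemma \ref{lem.alpha10alpha11} and states that
$$\alpha_{1,1} = h_0 h_1 + t_1 \alpha_{1,0}$$
in $A^*_T(\Pro^N)$. Since $t_1 \in A^*_T \subset A^*_T(\Pro^N)$, this identity exhibits $\alpha_{1,1}$ as the sum of $h_0 h_1$ and an $A^*_T(\Pro^N)$-multiple of $\alpha_{1,0}$; equivalently, after rearranging, it exhibits
$$h_0 h_1 = \alpha_{1,1} - t_1 \alpha_{1,0}.$$

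First I would establish the inclusion $(\alpha_{1,0}, \alpha_{1,1}) \subseteq (\alpha_{1,0}, h_0 h_1)$. The generator $\alpha_{1,0}$ trivially lies in the right-hand ideal, and the identity $\alpha_{1,1} = h_0 h_1 + t_1 \alpha_{1,0}$ shows that $\alpha_{1,1}$ is an $A^*_T(\Pro^N)$-combination of the two generators $h_0 h_1$ and $\alpha_{1,0}$, so it too lies there. For the reverse inclusion $(\alpha_{1,0}, h_0 h_1) \subseteq (\alpha_{1,0}, \alpha_{1,1})$, again $\alpha_{1,0}$ is a generator of the right-hand ideal, while the rearranged identity $h_0 h_1 = \alpha_{1,1} - t_1 \alpha_{1,0}$ shows that $h_0 h_1$ lies in the ideal generated by $\alpha_{1,0}$ and $\alpha_{1,1}$. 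Combining the two inclusions yields the claimed equality of ideals.

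There is essentially no obstacle here: the entire content is packaged into the single congruence $\alpha_{1,1} \equiv h_0 h_1 \pmod{\alpha_{1,0}}$ already proved in Lemma \ref{lem.alpha10alpha11}. The only point worth noting is that $A^*_T(\Pro^N)$ is a commutative ring, being the Chow ring of a smooth space, and that $t_1$ is a genuine element of it via the pullback $A^*_T \to A^*_T(\Pro^N)$, so that ``multiple of $\alpha_{1,0}$'' makes sense; both facts are immediate from the setup. Hence the lemma is a purely formal consequence of the preceding computation and requires no further calculation.
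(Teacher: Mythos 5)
Your proof is correct and matches the paper's reasoning: the paper states this lemma as an ``immediate consequence'' of the identity $\alpha_{1,1} = h_0h_1 + t_1\alpha_{1,0}$ from Lemma \ref{lem.alpha10alpha11}, and you have simply spelled out the two ideal inclusions that make this immediate.
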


If $m \geq 0$ let $\beta_{r,m}$ be the image of the class $h_{r,0} \ldots h_{r,m}$
in $A^*_T(\Pro^m)$.
\begin{lemma} \label{lem.hrm}
The class $\beta_{r,m}$ is 
a multiple of $h_0 h_1$.
\end{lemma}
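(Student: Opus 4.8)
The plan is to give the product $h_{0,r}\cdots h_{m,r}$ a transparent geometric meaning and then to track it through the multiplication map $\pi_r$. First I would note that $H_{0,r},\dots,H_{m,r}$ are the coordinate hyperplanes $X_0=0,\dots,X_m=0$ on $\Pro^r$, so they meet transversally and their $T$-equivariant fundamental classes multiply to the class of their intersection:
\[
h_{0,r}\cdots h_{m,r}=[L],\qquad L:=\{X_0=\cdots=X_m=0\}\subset\Pro^r .
\]
Here $L\cong\Pro^{\,r-m-1}$ is the linear subspace of degree $r$ forms divisible by $x_1^{m+1}$, since the vanishing of the coefficients of $x_0^r,x_0^{r-1}x_1,\dots,x_0^{r-m}x_1^{m}$ is exactly divisibility by $x_1^{m+1}$. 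Viewed on $\Pro^r\times\Pro^{N-2r}$, where these classes are pulled back from the first factor, the product is the class $[L\times\Pro^{N-2r}]$ of the preimage of $L$.

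Next I would chase $L$ through $\pi_r$. Since $m\ge 0$, every $f\in L$ satisfies $x_1^{m+1}\mid f$, so $f^2g$ is divisible by $x_1^{2m+2}$, and in particular by $x_1^2$, for all $g\in\Pro^{N-2r}$. Hence $\pi_r\bigl(L\times\Pro^{N-2r}\bigr)$ is contained in the locus $\{x_1^2\mid f\}\subset\Pro^N$, which is precisely the linear subspace $H_0\cap H_1=\{X_0=X_1=0\}\cong\Pro^{N-2}$. Writing $i\colon H_0\cap H_1\hookrightarrow\Pro^N$ for the inclusion, I can factor the restriction of $\pi_r$ as $i\circ\pi_r'$ with $\pi_r'\colon L\times\Pro^{N-2r}\to H_0\cap H_1$, so that
\[
\beta_{r,m}=\pi_{r*}[L\times\Pro^{N-2r}]=i_*\bigl(\pi'_{r*}[L\times\Pro^{N-2r}]\bigr)
\]
lies in the image of $i_*\colon A_*^T(H_0\cap H_1)\to A_*^T(\Pro^N)$.

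Finally I would identify that image with the ideal generated by $h_0h_1$. Because $i^*\xi$ is the hyperplane class of $H_0\cap H_1$ and $A^*_T(H_0\cap H_1)=A^*_T(\Pro^{N-2})$ is generated as an $A^*_T$-algebra by that class, the restriction $i^*\colon A^*_T(\Pro^N)\to A^*_T(H_0\cap H_1)$ is surjective. Since $H_0$ and $H_1$ are distinct coordinate hyperplanes meeting transversally we have $i_*(1)=h_0h_1$, so the projection formula gives $i_*(i^*a)=a\cdot h_0h_1$ for every $a\in A^*_T(\Pro^N)$; surjectivity of $i^*$ then shows that every element of the image of $i_*$ is a multiple of $h_0h_1$. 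Applying this to $\pi'_{r*}[L\times\Pro^{N-2r}]$ proves the lemma.

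The two routine ingredients are the transversality identification in the first step and the surjectivity of $i^*$ in the last. The conceptual heart, and the step I would verify most carefully, is the containment $\pi_r(L\times\Pro^{N-2r})\subseteq H_0\cap H_1$: the point is that divisibility of $f$ by $x_1^{m+1}$ forces divisibility of $f^2g$ by $x_1^2$ (in fact by $x_1^{2m+2}$), which is exactly what pins $\beta_{r,m}$ into the principal ideal $(h_0h_1)$.
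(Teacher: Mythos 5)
Your proof is correct, and it takes a genuinely different route from the paper's. You give $h_{0,r}\cdots h_{m,r}$ a direct geometric meaning as the class of the linear subspace $L\times \Pro^{N-2r}$ (forms divisible by $x_1^{m+1}$), observe that $\pi_r$ carries this locus into the linear subspace $H_0\cap H_1$ of forms divisible by $x_1^2$, and then identify the image of $i_*\colon A_*^T(H_0\cap H_1)\to A_*^T(\Pro^N)$ with the principal ideal $(h_0h_1)$ via the projection formula and the surjectivity of $i^*$; all three steps (equivariant transversality, factorization of proper pushforward through the image, and $i_*(i^*a)=a\,h_0h_1$) are sound. The paper argues instead on $(\Pro^1)^N$: it writes $(r-m-1)!(N-2r)!\,\beta_{r,m}$ as $\rho_*$ of the class of $(0\times 0)^{m+1}\times(\Delta_{\Pro^1\times\Pro^1})^{r-m-1}\times(\Pro^1)^{N-2r}$, expands each diagonal factor by Lemma \ref{lem.p1diag}, pushes forward using the $S_N$-equivariance of $\rho$, and finally divides out the factorial coefficients using divisibility of binomial type and the torsion-freeness of $A^*_T(\Pro^N)$. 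Your support-theoretic argument is shorter and more conceptual, and it completely avoids the combinatorial bookkeeping of degrees and the torsion-freeness step. What the paper's method buys is uniformity: the same diagonal-expansion machinery is reused essentially verbatim to handle $\alpha_{2,0}$ and $\alpha_{r,0}$ in Lemmas \ref{lem.alpha20} and \ref{lem.alphar0}, where your containment trick is unavailable --- $\Delta_r$ is not contained in any coordinate linear subspace, and indeed those classes are only shown to lie in $(\alpha_{1,0},h_0h_1)$, not in $(h_0h_1)$ --- so the paper sets up the $(\Pro^1)^N$ calculus once and applies it to all the generators, whereas your argument exploits special structure present only in the $\beta_{r,m}$.
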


\begin{proof}
Consider the diagram analogous to \eqref{diag.delta1}
\begin{equation} \label{diag.deltar} \begin{array}{ccc} 
({\Delta}_{\Pro^1 \times \Pro^1})^r \times (\Pro^1)^{N-2r} 
&  \hookrightarrow & (\Pro^1)^N\\
\downarrow \rho_r & & \downarrow \rho\\
\Delta_r & \stackrel{i_r} \hookrightarrow & \Pro^N
\end{array}
\end{equation}

Identifying $\Delta_{\Pro^1 \times \Pro^1}$ with $\Pro^1$
then $h_{r,0} \cdot \ldots \cdot h_{r,m}$ is the equivariant
fundamental class of the image of the $T$-invariant
subvariety 
$$(0 \times 0) \times  \ldots (0 \times 0) \times
(\Delta_{\Pro^1 \times \Pro^1})^{r-(m+1)} \times (\Pro^1)^{N-2r}.$$
(Here the first $(2m +2)$ coordinates are 0).
Let $\theta_{r,m}$ be the equivariant fundamental class of 
this $T$-invariant subvariety
Thus,  $\rho_{r*}\theta_{r,m} = (N-2r)!(r-m-1)! h_{r,0} \cdot \ldots \cdot h_{r,m}$.
On the other hand we may expand $\theta_{r,m}$ by replacing 
$\Delta_{\Pro^1 \times \Pro^1}$ with the formula of \eqref{eq.p1diag}
to
obtain a sum
of classes which are permutations of classes of the form
\begin{equation} \label{eq.zeros}
(t_1 - t_2)^{m+r+1 -l}[0 \times 0 \ldots \times 0 \times (\Pro^1)^{N-l}]
\end{equation}
where first $l$ coordinates are 0 and $2m +2 \leq l \leq 
r + m +1$.
The action of $T$ on $(\Pro^1)^N$ commutes with natural
permutation action of $S_N$ and the map $\rho$ is $S_N$-equivariant
the pushforward $\rho_*$ of the classes in \eqref{eq.zeros}
we obtain the identity
\begin{equation}
(r-(m+1))!(N-2r)! \beta_{r,m} = \sum_{l = 2m +2}^{m + r + 1} a_{l} (N-l)!(t_1 -t_2)^{m+r+1-l}
h_0 \cdot \ldots \cdot h_l
\end{equation}
where the $a_l$'s are positive integers.
Since $l \leq  m+r +1$ it follows that
$(r- (m+1))!(N-2r)!$ divides $(N-l)!$ \footnote{
This follows because $a!(N-b)!$ always
divides $(N-b+a)!$ since $\frac{(N-b+a)!}{a!(N-b)!}$ is
a binomial coefficient, so $a!(N-b)!$ divides $(N-k)!$
if $k \leq b-a$.} Thus, since $A^*_T(\Pro^N)$ is torsion free
$\alpha_{r,m}$ is an integral sum of terms of the form 
$(t_1-t_2)^{m+r+1-l} h_0 \cdot \ldots \cdot h_l$
which is clearly a multiple of $h_0h_1$.
\end{proof}

\begin{lemma} \label{lem.alpha20}
$\alpha_{2,0}$ is in the ideal $(\alpha_{1,0}, h_0h_1)$.
\end{lemma}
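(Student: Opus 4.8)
The plan is to bypass $\pi_2$ and instead produce $\Delta_2$ from the simpler discriminant map $\pi_1$, using that a form divisible by the square of a quadratic can be written $\ell^2 G$ with $G$ itself carrying a repeated root. Since $\pi_2$ is birational onto its image we have $\alpha_{2,0}=[\Delta_2]$. Write $\Delta_1^{(N-2)}\subset \Pro^{N-2}$ for the discriminant hypersurface of binary forms of degree $N-2$ and let $D=\Pro^1\times \Delta_1^{(N-2)}$ be its flat pullback under the second projection. First I would check that $\pi_1$ restricts to a generically two-to-one map $D\to\Delta_2$, so that $\pi_{1*}[D]=2\alpha_{2,0}$: a general point of $\Delta_2$ is $q^2h$ with $q=\ell m$ a squarefree quadratic and $h$ squarefree of degree $N-4$ prime to $q$, and its only preimages in $D$ are $(\ell,m^2h)$ and $(m,\ell^2h)$, because $\ell'^2\mid q^2h$ forces $\ell'\in\{\ell,m\}$. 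The hypothesis $\Char k\neq 2$ guarantees this double cover is separable.

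I would then compute $[D]$ and push it forward. As the flat pullback of $\Delta_1^{(N-2)}$, the class $[D]$ is the degree $N-2$ analogue $\alpha_{1,0}^{(N-2)}$, so by \eqref{eq.alpha10} with $N$ replaced by $N-2$, $[D]=2(N-3)\xi_{N-2,2}-(N-2)(N-3)c_1$. From the relation $\pi_1^*\xi=2\xi_{1,1}+\xi_{N-2,2}$ and the projection formula, together with $\pi_{1*}(1)=\alpha_{1,0}$ and $\pi_{1*}(\xi_{1,1})=\alpha_{1,1}$, one gets $\pi_{1*}(\xi_{N-2,2})=\xi\alpha_{1,0}-2\alpha_{1,1}$. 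Hence
\[
2\alpha_{2,0}=\pi_{1*}[D]=2(N-3)\bigl(\xi\alpha_{1,0}-2\alpha_{1,1}\bigr)-(N-2)(N-3)c_1\alpha_{1,0}.
\]

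Because $(N-2)(N-3)$ is even, the right-hand side is $2$ times the integral class $(N-3)(\xi\alpha_{1,0}-2\alpha_{1,1})-\frac{(N-2)(N-3)}{2}c_1\alpha_{1,0}$, and since $A^*_T(\Pro^N)$ is torsion free I may cancel the $2$ to conclude that $\alpha_{2,0}$ equals this class, which lies in $(\alpha_{1,0},\alpha_{1,1})=(\alpha_{1,0},h_0h_1)$. The step I expect to be the main obstacle is controlling the two-to-one degree: verifying that $\pi_1|_D$ dominates $\Delta_2$ with no stray components and that the factor $2$ may be divided off, this last point being exactly where torsion freeness is essential. For comparison, the more direct route of pushing $[\Delta_{\Pro^1\times\Pro^1}]^2$ forward along the multiplication map $\rho\colon(\Pro^1)^N\to\Pro^N$, in the style of the preceding lemmas, also works but is messier: it produces a leftover term proportional to $(t_1-t_2)^2$ which only enters the ideal after one trades it against $h_0h_1$ using the product $\xi\alpha_{1,0}$.
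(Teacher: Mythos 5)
Your proof is correct, and it takes a genuinely different route from the paper's. The paper stays inside the framework of the surrounding lemmas: it realizes $\Delta_2$ as the image under the multiplication map $\rho\colon(\Pro^1)^N\to\Pro^N$ of $(\Delta_{\Pro^1\times\Pro^1})^2\times(\Pro^1)^{N-4}$, expands both diagonal factors via \eqref{eq.p1diag}, collects terms using the $S_N$-invariance of $\rho_*$, divides off the factorial multiplicities by torsion-freeness, and then still needs the identity \eqref{eq.th0} to push the leftover term $(t_1-t_2)h_0$ into the ideal --- exactly the ``messier'' alternative you anticipate in your closing sentence. Your argument instead exhibits $\Delta_2$ as the degree-$2$ image under $\pi_1$ of $D=\Pro^1\times\Delta_1^{(N-2)}$, reuses the already-established formula \eqref{eq.alpha10} in degree $N-2$ to write $[D]=2(N-3)\xi_{N-2,2}-(N-2)(N-3)c_1$, and pushes forward by the projection formula; this avoids all of the $S_N$-combinatorics, divides only by $2$ instead of $2!(N-4)!$, needs no analogue of \eqref{eq.th0}, and yields the clean closed formula
\begin{equation*}
\alpha_{2,0}=(N-3)\bigl(\xi\alpha_{1,0}-2\alpha_{1,1}\bigr)-\frac{(N-2)(N-3)}{2}\,c_1\alpha_{1,0},
\end{equation*}
which visibly lies in $(\alpha_{1,0},\alpha_{1,1})=(\alpha_{1,0},h_0h_1)$ (the lemma immediately following Lemma \ref{lem.alpha10alpha11}). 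I verified your formula against the paper's \eqref{eq.delta2.3} at $N=4$ and they agree; incidentally this comparison shows the coefficient $2(N-1)(N-2)(N-3)$ of $h_0h_1$ printed in \eqref{eq.delta2.4} is a typo for $2(N-2)(N-3)$, which is harmless for the ideal membership. Two small points to tighten in your write-up: the separability of the double cover $D\to\Delta_2$ and the birationality of $\pi_2$ (used in $\alpha_{2,0}=[\Delta_2]$) should be justified by the section's standing hypothesis $\Char k=0$ or $\Char k>N-2$ via Vistoli's Lemma, exactly as in the paper's own degree claims, rather than by $\Char k\neq 2$ alone; and one should note that $D$ is irreducible (being $\Pro^1$ times the image of the irreducible variety $\Pro^1\times\Pro^{N-4}$), so that $\pi_{1*}[D]=2[\Delta_2]$ with no extraneous components.
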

\begin{proof}
Since $\Delta_{2}$ is the image of the fundamental class
of $\Delta_{\Pro^1 \times \Pro^1} \times \Delta_{\Pro^1 \times \Pro^1}
\times (\Pro^1)^{N-4}$ we see that
\begin{equation} \label{eq.delta2}
2!(N-4)! \alpha_{2,0} = \\
\rho_*[\Delta_{\Pro^1 \times \Pro^1} \times \Delta_{\Pro^1 \times \Pro^1} \times (\Pro^1)^{N-4}]
\end{equation}
Expand the first diagonal term as 
$$[\Pro^1 \times 0] + [0 \times \Pro^1]
+ (t_1 -t_2)[\Pro^1 \times \Pro^1]$$ 
and substitute this into the right-hand-side of equation \eqref{eq.delta2}
to obtain
\begin{multline} \label{eq.delta2.1}
2!(N-4)!\alpha_{2,0} =
\rho_*[0 \times \Pro^1 \times \Delta_{\Pro^1 \times \Pro^1} 
\times (\Pro^{1})^{N-4}] \\
+ \rho_*[\Pro^1 \times 0 \times \Delta_{\Pro^1 \times \Pro^1} 
\times (\Pro^{1})^{N-4}]
+(N-2)!(t_1 - t_2)\alpha_{1,0}
\end{multline}
Since the action of $T$ on $(\Pro^1)^N$ commutes with the permutation
action of $S_N$ the first two terms in the right-hand side of
\eqref{eq.delta2.1} are equal.
Now expand the remaining $\Delta_{\Pro^1 \times \Pro^1}$ as above 
so that the sum of the first
two terms on the right hand side of \eqref{eq.delta2.1} now becomes
\begin{multline} \label{eq.delta2.2}
2\rho_*[\Pro^1 \times 0 \times \Pro^1 \times 0 \times (\Pro^1)^{N-4}]
+\rho_*[\Pro^1 \times 0 \times 0 \times \Pro^1 \times (\Pro^1)^{N-4}] \\
+(t_1 -t_2)\rho_* [\Pro^1 \times 0  \times \Pro^1 \times \Pro^1 \times (\Pro^1)^{N-4}]
\end{multline}
Using the fact $\rho_*$ commutes with the permutation action
of $S_N$ we can combine the terms coming from \eqref{eq.delta2.2}
with the last term on the right side of equation \eqref{eq.delta2.1}
to obtain
\begin{equation} \label{eq.delta2.3}
2!(N-4)! \alpha_{2,0} = 4(N-2)! h_0 h_1\\ + 2(N-1)!(t_1 - t_2)h_0
+(N-2)!(t_1 - t_2) \alpha_{1,0}
\end{equation}
Dividing through by $2!(N-4)!$ and again invoking the fact
that $A^*_T(\Pro^N)$ is torsion free we obtain
the equation
\begin{multline} \label{eq.delta2.4}
\alpha_{2,0} = 2(N-1)(N-2)(N-3) h_0h_1 + (N-1)(N-2)(N-3)(t_1 -t_2)h_0\\
+ \frac{(N-2)(N-3)}{2} (t_1 -t_2)\alpha_{1,0}
\end{multline}
The first and third terms on the right-hand-side are clearly in
the ideal $(\alpha_{1,0},h_0h_1)$. The middle term
is in the ideal because by \eqref{eq.hi} and \eqref{eq.alpha10}
\begin{equation} \label{eq.th0}
(N-1)(N-2)h_0(t_1 -t_2) = h_0\alpha_{1,0} - 2(N-1)h_0h_1
\end{equation}
\end{proof}
Our last lemma completes the proof of the proposition.
\begin{lemma} \label{lem.alphar0}
If $r\geq 3$ then $\alpha_{r,0}$ is in the ideal
$(\alpha_{1,0}, h_0h_1)$.
\end{lemma}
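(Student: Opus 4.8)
The plan is to follow the method of Lemma~\ref{lem.alpha20}, but to expand all $r$ diagonal factors simultaneously rather than one at a time. As in the diagram~\eqref{diag.deltar} the map $\rho_r$ has degree $r!(N-2r)!$, so
$$r!(N-2r)!\,\alpha_{r,0} = \rho_*\left[(\Delta_{\Pro^1\times\Pro^1})^r\times(\Pro^1)^{N-2r}\right].$$
First I would expand each diagonal by \eqref{eq.p1diag}, writing $[\Delta_{\Pro^1\times\Pro^1}] = [0\times\Pro^1]+[\Pro^1\times0]+(t_1-t_2)$. A term of the expanded product is specified by choosing, for each of the $r$ diagonals, one of the two point classes (placing a single zero at the point $0$ in one factor of $(\Pro^1)^N$) or the scalar $(t_1-t_2)$. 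If exactly $k$ diagonals contribute a zero there are $\binom{r}{k}2^k$ such terms, each carrying the factor $(t_1-t_2)^{r-k}$ and having zeros in $k$ distinct factors. Because $\rho$ is $S_N$-invariant, $\rho_*$ is constant on $S_N$-orbits, and any class with zeros at $0$ in $k$ distinct factors pushes forward to $(N-k)!\,h_0h_1\cdots h_{k-1}$ (the image is $H_0\cap\cdots\cap H_{k-1}$ and the restricted multiplication map has degree $(N-k)!$). This yields the closed formula
$$r!(N-2r)!\,\alpha_{r,0} = \sum_{k=0}^{r}\binom{r}{k}2^{k}(N-k)!\,(t_1-t_2)^{r-k}\,h_0h_1\cdots h_{k-1}.$$

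Every summand with $k\ge 2$ is a multiple of $h_0h_1$, hence already lies in $(\alpha_{1,0},h_0h_1)$. The genuine obstruction is the pair of low-order terms $k=0$ and $k=1$, namely $N!\,(t_1-t_2)^r$, which contains neither $h_0$ nor $h_0h_1$, together with $2r(N-1)!\,(t_1-t_2)^{r-1}h_0$. The key maneuver is to absorb the dangerous $(t_1-t_2)^r$ into a multiple of $\alpha_{1,0}$: multiplying \eqref{eq.alpha10} by $(N-2)!\,(t_1-t_2)^{r-1}$ gives
$$(N-2)!\,(t_1-t_2)^{r-1}\alpha_{1,0} = N!\,(t_1-t_2)^{r} + 2(N-1)!\,(t_1-t_2)^{r-1}h_0,$$
so the formula can be rewritten as
$$r!(N-2r)!\,\alpha_{r,0} = (N-2)!\,(t_1-t_2)^{r-1}\alpha_{1,0} + 2(r-1)(N-1)!\,(t_1-t_2)^{r-1}h_0 + B',$$
where $B'$ is an explicit integral combination of multiples of $h_0h_1$.

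Finally I would divide through by $r!(N-2r)!$. This is legitimate because $A^*_T(\Pro^N)$ is torsion free, once one checks that every coefficient on the right is divisible by $r!(N-2r)!$; this is routine, since each coefficient is an integer times $(N-j)!/\bigl(r!(N-2r)!\bigr)$ with $j\le r$, and $(N-j)!/(N-2r)!$ is a product of $2r-j$ consecutive integers, hence divisible by $(2r-j)!$ and, since $2r-j\ge r$, by $r!$. After dividing, the resulting $\alpha_{1,0}$-term and the $B'$-terms lie in $(\alpha_{1,0},h_0h_1)$ by inspection. The leftover term $p\,(t_1-t_2)^{r-1}h_0$, with $p = 2(r-1)(N-1)!/\bigl(r!(N-2r)!\bigr)$, is disposed of exactly as in \eqref{eq.th0}: multiplying that relation by $(t_1-t_2)^{r-2}$ shows $(N-1)(N-2)(t_1-t_2)^{r-1}h_0\in(\alpha_{1,0},h_0h_1)$, so it suffices that $(N-1)(N-2)\mid p$. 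I expect this last divisibility to be the main obstacle of the argument; writing $p/\bigl((N-1)(N-2)\bigr) = 2(r-1)(N-3)!/\bigl(r!(N-2r)!\bigr)$, it amounts to $r!\mid 2(r-1)(2r-3)!$, which holds for $r\ge 3$ because then $2r-3\ge r$. Hence $\alpha_{r,0}\in(\alpha_{1,0},h_0h_1)$, as claimed.
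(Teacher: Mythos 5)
Your argument is correct, and it reorganizes the paper's proof in a genuinely different way. The paper does not expand all the diagonals at once: it expands only the first $r-2$ copies of $[\Delta_{\Pro^1 \times \Pro^1}]$, so that $r!(N-2r)!\,\alpha_{r,0}$ becomes $2(N-4)!(t_1-t_2)^{r-2}\alpha_{2,0}$ plus pushforwards of classes of the form $(0\times\Pro^1)^{k}\times(\Delta_{\Pro^1\times\Pro^1})^2\times(\Pro^1)^{N-2(k+2)}$; it then quotes Lemma \ref{lem.alpha20} to dispose of the $\alpha_{2,0}$ term and expands the remaining two diagonals inside the other terms (equations \eqref{eq.deltarr} and \eqref{eq.deltarrr}), checking in each case that the integer coefficients are divisible by $r!(N-2r)!$. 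You instead expand all $r$ diagonals simultaneously, arriving at the closed formula
$$r!(N-2r)!\,\alpha_{r,0}=\sum_{k=0}^{r}\binom{r}{k}2^{k}(N-k)!\,(t_1-t_2)^{r-k}\,h_0h_1\cdots h_{k-1},$$
whose $k\ge 2$ terms are visible multiples of $h_0h_1$; your treatment of the $k=0$ term (absorbing it into $(N-2)!(t_1-t_2)^{r-1}\alpha_{1,0}$ via \eqref{eq.alpha10.5}) and of the $k=1$ remainder (via \eqref{eq.th0}) mirrors exactly what the paper does for its analogous low-order terms, and your two divisibility checks --- $r!(N-2r)!\mid (N-j)!$ for $j\le r$, and $r!\mid 2(r-1)(2r-3)!$, both consequences of $2r-j\ge r$ --- are right. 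What your route buys: a self-contained proof with an explicit formula, which never invokes Lemma \ref{lem.alpha20} and in fact nearly subsumes it (for $r=2$ the needed divisibility $2!\mid 2\cdot 1\cdot 1!$ still holds, although the inequality $2r-3\ge r$ you cite fails there, so that case needs a separate one-line check). What the paper's route buys: by peeling off diagonals and reusing the already-proved $r=2$ case, each individual expansion stays short, and the combinatorial bookkeeping behind your coefficients $\binom{r}{k}2^k$ is left implicit in the unspecified positive integers $b_k$.
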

\begin{proof}
As above we have
\begin{equation} \label{eq.delta_r}
r!(N-2r)! \alpha_{r,0} 
= \rho_* [(\Delta_{\Pro^1 \times \Pro^1})^r \times (\Pro^1)^{N-2r}]
\end{equation}
Expanding out the first $r-2$ copies of 
the fundamental class of $\Delta_{\Pro^1 \times \Pro^1}$ 
in the right-hand side of \eqref{eq.delta_r} we obtain
that  $(\Delta_{\Pro^1 \times \Pro^1})^r \times (\Pro^{1})^{N-2r}$ is the sum
of 
$$(t_1 -t_2)^{r-2} 
[(\Delta_{\Pro^1 \times \Pro^1})^2 \times (\Pro^{1})^{N-4}]$$
plus terms which are permutations of the $r-2$ classes
$$(t_1 - t_2)^{r-2-k} [(0 \times \Pro^1)^{k}
\times (\Delta_{\Pro^1 \times \Pro^1})^2]
\times (\Pro^1)^{N-2(k+2)}$$ where  $1 \leq k \leq r-2$.
Since $$\rho_*[(\Delta_{\Pro^1 \times \Pro^1})^2 \times (\Pro^{1})^{N-4}]= 2(N-4)!\alpha_{2,0}$$
there are positive integers $b_1, \ldots b_{r-2}$ such that
\begin{multline} \label{eq.deltar}
r!(N-2r)! \alpha_{r,0} = 2(N-4)!(t_1 - t_2)^{r-2} \alpha_{2,0}\\
+ \sum_{k=1}^{r-2} b_k (t_1 -t_2)^{r-2-k}\rho_*[(0 \times \Pro^1)^{k} 
\times (\Delta_{\Pro^1 \times \Pro^1})^2 
\times (\Pro^1)^{N-2(k+2)}]
\end{multline}
Now expand 
$[(\Delta_{\Pro^1 \times \Pro^1})^2]$ in $A^*_T((\Pro^1)^4)$
as 
\begin{equation} \label{eq.deltasq} 
([\Pro^1 \times 0] + [0 \times \Pro^1] +(t_1 -t_2) \Pro^1
\times \Pro^1)\times ([\Pro^1 \times 0] + [0 \times \Pro^1]
+(t_1 -t_2) [\Pro^1 \times \Pro^1)].
\end{equation}
When  $k \geq 2$ we substitute \eqref{eq.deltasq} and use
the permutation invariance of $\rho_*$ to obtain 
\begin{multline} \label{eq.deltarr}
\rho_*[(0 \times \Pro^1)^{k} \times (\Delta_{\Pro^1 \times \Pro^1})^2
\times (\Pro^1)^{N-2(k+2)}] =  (N-k)!(t_1  -t_2)^2 h_0 \ldots h_{k-1}\\
+ 4(N-(k+2))!(t_1 -t_2)h_0\ldots h_{k+1} + 4(N - (k+4))! h_0 \ldots
h_{k+3}.
\end{multline}
When $k =1$ we use the same expansion to obtain
that 
\begin{multline} \label{eq.deltarrr}
\rho_*([0 \times \Pro^1 \times (\Delta_{\Pro^1 \times \Pro^1})^2 \times
(\Pro^1)^{N-6}]) = 4(N-3)! h_0 h_1 h_2 \\+ 4(N-2)!(t_1 -t_2)h_0h_1 +
(N-1)!(t_1 -t_2)^2h_0
\end{multline}
Since $r \geq 3$, the first term on the right-hand-side of \eqref{eq.deltar}
is  divisible $r!(N-2r)! \alpha_{2,0}$ and thus is an element of $r!(N-2r)!(\alpha_{1,0},
h_0h_1)$
by Lemma \ref{lem.alpha20}.
Likewise, each of the three terms on the right-hand-side 
of  \eqref{eq.deltarr} is divisible $r!(N-2r)! h_0 h_1$
as is the
first  term on the right-hand-side of \eqref{eq.deltarrr}.
Finally by \eqref{eq.th0} we know that $(N-1)(N-2)(t_1 -t_2)h_0 \in (\alpha_{1,0}, h_0h_1)$.
Hence $(N-1)!(t_1 -t_2)h_0$ is in $(N-3)! (\alpha_{1,0}, h_0h_1)$. Since $r \geq 3$,
$r!(N-2r)! | (N-3)!$, so $(N-1)!(t_1 - t_2) h_0$ is in $r!(N-2r)! (\alpha_{1,0},h_0h_1)$.
Hence all of the terms appearing on the right hand side of
\eqref{eq.deltarrr} are in $r!(N-2r)!(\alpha_{1,0}, h_0h_1)$. Since $A^*_T(\Pro^N)$ is torsion
free we conclude that $\alpha_{r,0} \in (\alpha_{1,0}, h_0h_1)$.
\end{proof}

\subsection{The ideal generated by the image of $A^{\GL_2}_*(\Delta_1)$
in $A^*_{\GL_2}(\Pro^N)$} \label{sec.gl2}
We are now in a position to prove our main result
\begin{thm} \label{thm.deg}
The image of $A_*^{\GL_2}(\Delta_1)$ in $A^*_{\GL_2}(\Pro^N)$ is the ideal $I  = (\alpha_{1,0},
\alpha_{1,1})$.
\end{thm}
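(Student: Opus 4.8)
The plan is to deduce this $\GL_2$-statement directly from its torus analogue, Proposition \ref{prop.deg}, by descending along the restriction map $A^*_{\GL_2}(\Pro^N) \to A^*_T(\Pro^N)$ and exploiting the fact that $\GL_2$ is special. Write $R = A^*_{\GL_2}(\Pro^N)$ and $S = A^*_T(\Pro^N)$. By Proposition \ref{prop.stratpn} and the subsequent discussion, the image of $A_*^{\GL_2}(\Delta_1)$ in $R$ is the ideal generated by the classes $\{\alpha_{r,i}\}$. Since $\alpha_{1,0} = \pi_{1*}(1)$ and $\alpha_{1,1} = \pi_{1*}(\xi_{1,1})$ are themselves among these generators, the inclusion $(\alpha_{1,0},\alpha_{1,1}) \subseteq I$ is immediate; the real content is the reverse inclusion, i.e.\ that each $\alpha_{r,i}$ lies in $(\alpha_{1,0},\alpha_{1,1})R$.

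First I would restrict to the torus. The restriction map $R \to S$ is injective and commutes with the equivariant pushforwards $\pi_{r*}$, so the image in $S$ of each $\alpha_{r,i}\in R$ is exactly the class of the same name occurring in Proposition \ref{prop.deg}. That proposition asserts that these torus classes all lie in the ideal $(\alpha_{1,0},\alpha_{1,1})S$. Hence, viewed in $S$, each generator can be written as $\alpha_{r,i} = s_{r,i,0}\,\alpha_{1,0} + s_{r,i,1}\,\alpha_{1,1}$ for suitable $s_{r,i,0}, s_{r,i,1} \in S$.

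Next I would descend back to $R$ using Proposition \ref{prop.tsummand}. Because $\GL_2$ is special, that proposition furnishes an $R$-linear splitting $\sigma \colon S \to R$ of the restriction map, so that $\sigma$ is the identity on $R$. Applying $\sigma$ to the displayed expression and using both the $R$-linearity of $\sigma$ and the fact that $\alpha_{1,0},\alpha_{1,1}\in R$, we obtain
$$
\alpha_{r,i} = \sigma(\alpha_{r,i}) = \sigma(s_{r,i,0})\,\alpha_{1,0} + \sigma(s_{r,i,1})\,\alpha_{1,1},
$$
where now $\sigma(s_{r,i,0}), \sigma(s_{r,i,1}) \in R$. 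Thus $\alpha_{r,i} \in (\alpha_{1,0},\alpha_{1,1})R$ for every $r$ and $i$, which gives the reverse inclusion and completes the proof. This is an instance of the general principle that, for a split inclusion $R \hookrightarrow S$ of $R$-modules, an element of $R$ lying in the extension to $S$ of an ideal generated by elements of $R$ already lies in that ideal of $R$.

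All the genuine difficulty is absorbed into Proposition \ref{prop.deg}; the only points here that require care are the compatibility of the pushforwards $\pi_{r*}$ with restriction to $T$ (so that the torus classes really are the images of the $\GL_2$ classes) and the $R$-linearity of the splitting $\sigma$, both of which are exactly what make the one-line descent argument valid.
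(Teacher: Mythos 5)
Your proof is correct and is essentially the paper's own argument: the paper also deduces the theorem from Proposition \ref{prop.deg} via Proposition \ref{prop.tsummand}, writing $A^*_T(\Pro^N) = A^*_{\GL_2}(\Pro^N) \oplus M$ and decomposing the coefficients $a = a_s + a_u$, $b = b_s + b_u$, which is exactly your $R$-linear retraction $\sigma$ in different notation. The only cosmetic difference is that the paper applies the splitting to an arbitrary element $f$ of the image of $A_*^{\GL_2}(\Delta_1)$ rather than generator by generator as you do.
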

\begin{remark}
When $N = 6$ this result was previously obtained by Vistoli 
\cite{Vis:98} for his calculation of $A^*{\mathcal M}_2$.
\end{remark}
\begin{proof}
Since $\GL_2$ is special, $A^*_{\GL_2}(\Pro^N)$ is (non-canonically) a
summand in the $A^*_{\GL_2}(\Pro^N)$-module $A^*_T(\Pro^N)$ by Proposition \ref{prop.tsummand}.
Thus we may decompose the $A^*_{\GL_2}(\Pro^N)$-module $A^*_T(\Pro^N)$
as $A^*_T(\Pro^N) = A^*_{\GL_2}(\Pro^N) \oplus M$
where $M \subset A^*_{T}(\Pro^N)$ is a complimentary submodule.

Now suppose that $f \in A^*_{\GL_2}(\Pro^N)$ is in the
image of $A_*^{\GL_2}(\Delta_1)$. Since the inclusion
of $A^*_{\GL_2}(\Delta_1)$ in $A^*_T(\Delta_1)$
commutes with the direct image map $i_* \colon \Delta_1 \to \Pro^N$,
we may view $f$ as being in the image of $A^*_T(\Delta_1)$.
Thus by our previous proposition we may write
$f = a \alpha_{1,0} + b \alpha_{1,1}$ for some $a,b \in A^*_T(\Pro^N)$.
Using the decomposition above we may write
$a = a_s + a_u$, $b = b_s + b_u$  with $a_s, b_s$ 
in $A^*_{\GL_2}(\Pro^N)$ 
and $a_u, b_u$ in $M$. Thus
$$f = a_s \alpha_{1,0} + b_s \alpha_{1,1} + a_u \alpha_{1,0} + b_u \alpha_{1,1}.$$
Since $f \in A^*_{\GL_2}(\Pro^N)$ it follows that
$a_u \alpha_{1,0} + b_u \alpha_{1,1} $ is an element of 
$M \cap A^*_{\GL_2}(\Pro^N)=\{0\}$.
Hence $f = a_s \alpha_{1,0} + b_s \alpha_{1,1}$, so 
$f$ is in the ideal of $A^*_{\GL_2}(\Pro^N)$ generated by
$\alpha_{1,0}$ and $\alpha_{1,1}$.
\end{proof}

\section{Proof of Theorem \ref{thm.main}} 
We can now easily complete the proof of Theorem \ref{thm.main}
By Theorem \ref{thm.deg} 
$$A^*_{\GL_2}(\Pro^{2g+2} - \Delta_1) =
A^*_{\GL_2}(\Pro^{2g+2})/(\alpha_{1,0},\alpha_{1,1})$$ where
\begin{eqnarray}
\alpha_{1,0} & = & 2(2g+1)\xi -(2g+2)(2g+1)c_1 \label{eq.alpha10xi} \\
\alpha_{1,1} & = & \xi^2 - \xi_1 c_1 - (2g+2)(2g)c_2 \label{eq.alpha11xi}
\end{eqnarray}
If $C_1, \ldots , C_{2g+3}$ are the $\GL_2$-equivariant Chern
classes of the representation $\Sym^{2g+2}E^*$
set 
\begin{equation} \label{eq.pxi}
P = \xi^{2g+3} + C_1 \xi^{2g+1} + \ldots C_{2g+2}
\end{equation}
so that $A^*_{\GL_2}(\Pro^{2g+2}) = \Z[c_1,c_2][\xi]/P$ by Lemma \ref{lem.projspace}.
Let $\alpha_{1,0}(gc_1)$, $\alpha_{1,0}(gc_1)$, $P(gc_1)$
be the polynomials in $c_1, c_2$ obtained by substituting
$\xi =g c_1$ in \eqref{eq.alpha10xi}-\eqref{eq.pxi} then
by Lemma \ref{lem.reduction},
$$A^*({\mathcal H}_g)= \Z[c_1,c_2]/(\alpha_{1,0}(gc_1), \alpha_{1,1}(gc_1), P(gc_1))$$
where 
\begin{eqnarray*}
\alpha_{1,0}(gc_1) & =& -2(2g+1)c_1\\
\alpha_{1,1}(gc_1) & = & -g(g-1)c_1^2 + 4g(g+1)c_2
\end{eqnarray*}

The theorem then follows from our last lemma.
\begin{lemma} \label{lem.pgc1}
The polynomial $P(gc_1)$ is in the ideal 
$\left(\alpha_{1,0}(gc_1), \alpha_{1,1}(gc_1)\right)$.
\end{lemma}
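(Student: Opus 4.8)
The plan is to exploit the factorization of $P$ coming from the Chern roots of $\Sym^{2g+2}E^*$. By Lemma~\ref{lem.projspace}, $P = \prod_i(\xi + l_i)$, where the $l_i$ are the Chern roots of $\Sym^{2g+2}E^*$; since $E$ has Chern roots $t_1,t_2$, these are $l_i = -it_1 - (2g+2-i)t_2$ for $0\le i\le 2g+2$. Substituting $\xi = gc_1 = g(t_1+t_2)$, the involution $i\mapsto 2g+2-i$ fixes $i=g+1$ (whose factor is $-c_1$) and pairs $g+1-k$ with $g+1+k$; a short computation turns each pair into $c_1^2 - k^2(t_1-t_2)^2$, so that
$$P(gc_1) = -c_1\prod_{k=1}^{g+1}q_k,\qquad q_k = (1-k^2)c_1^2 + 4k^2c_2.$$
The decisive observation is that $q_1 = 4c_2$, whence $P(gc_1) = -4c_1c_2\prod_{k=2}^{g+1}q_k$; the explicit factor $4$ is what will let me absorb the $2$-torsion at the very end.

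Next I would record the exact identity $(g+1)q_k = \bigl((g+1)-2k^2\bigr)c_1^2 + k^2\gamma$, where $\gamma := 4(g+1)c_2 - (g-1)c_1^2$, chosen so that $\alpha_{1,1}(gc_1) = g\gamma$. Multiplying the $g$ identities for $k=2,\dots,g+1$ and separating off the $\gamma$-free term gives
$$(g+1)^g\prod_{k=2}^{g+1}q_k = C'c_1^{2g} + \gamma\,\Phi',\qquad C' = \prod_{k=2}^{g+1}\bigl((g+1)-2k^2\bigr),$$
for some $\Phi'\in\Z[c_1,c_2]$. The factor of $C'$ at $k=g+1$ is $(g+1)-2(g+1)^2 = -(g+1)(2g+1)$, so $2g+1\mid C'$. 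Multiplying through by the prefactor $-4c_1c_2$ yields
$$(g+1)^gP(gc_1) = -4C'c_1^{2g+1}c_2 - 4c_1c_2\gamma\,\Phi'.$$
Here $4C' = 2\cdot 2(2g+1)\cdot\bigl(C'/(2g+1)\bigr)$, so the first term is $c_1^{2g}c_2$ times an even multiple of $2(2g+1)c_1 = -\alpha_{1,0}(gc_1)$ and hence lies in $(\alpha_{1,0}(gc_1))$. For the second term I would invoke the Bezout relation $2 = 2(2g+1) - 4g$: writing it as $-2c_1\gamma H$ with $H = 2c_2\Phi'$,
$$2c_1\gamma H = \bigl(2(2g+1)c_1\bigr)\gamma H - 4c_1H\,(g\gamma) = -\alpha_{1,0}(gc_1)\,\gamma H - 4c_1H\,\alpha_{1,1}(gc_1),$$
which lies in $\bigl(\alpha_{1,0}(gc_1),\alpha_{1,1}(gc_1)\bigr)$. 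Thus $(g+1)^gP(gc_1)$ lies in the ideal.

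The main obstacle is removing the spurious factor $(g+1)^g$, and for this I would argue locally at each prime, using that $\gcd(g+1,\,2g(2g+1))=1$ (since $g$ is even, $g+1$ is odd and coprime both to $g$ and to $2g+1$). If $p\nmid g+1$ then $g+1$ is a unit in $\Z_{(p)}$, so $(g+1)^gP(gc_1)\in\bigl(\alpha_{1,0}(gc_1),\alpha_{1,1}(gc_1)\bigr)$ already forces $P(gc_1)$ into that ideal after localizing at $p$. If $p\mid g+1$ then $2$, $g$ and $2g+1$ are units in $\Z_{(p)}$, so the localized ideal is simply $(c_1,\gamma)$, which contains $P(gc_1)$ because $c_1\mid P(gc_1)$. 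Since an element of $\Z[c_1,c_2]/\bigl(\alpha_{1,0}(gc_1),\alpha_{1,1}(gc_1)\bigr)$ that vanishes in every localization $\Z_{(p)}$ must vanish, this yields $P(gc_1)\in\bigl(\alpha_{1,0}(gc_1),\alpha_{1,1}(gc_1)\bigr)$, as required. The delicate point throughout is the bookkeeping of the integer factors: the coprimality of $g+1$ to the torsion orders, the single factor $2g+1$ supplied by $k=g+1$, and the factor $4$ supplied by $q_1$ are exactly what make the two leftover terms land in the ideal rather than merely in a proper multiple of it.
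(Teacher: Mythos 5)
Your proof is correct, but after the common first step it takes a genuinely different route from the paper's. Both arguments begin with the same pairing of Chern roots around the fixed factor $-c_1$, giving $P(gc_1)=-c_1\prod_{k=1}^{g+1}q_k$ with $q_k=(1-k^2)c_1^2+4k^2c_2$ (the paper writes this with $j=g+1-k$, so its $j$-th factor $-(g-j)(g-j+2)c_1^2+4(g+1-j)^2c_2$ is your $q_{g+1-j}$). From there the paper finishes in a single stroke with one exact integral identity: the product of its $j=0$ and $j=1$ factors, i.e.\ your $q_{g+1}q_g$, equals $(\alpha_{1,0}(gc_1))^2c_2+\alpha_{1,0}(gc_1)\,\alpha_{1,1}(gc_1)\,c_1+(\alpha_{1,1}(gc_1))^2$, so $P(gc_1)$ lies in the ideal with no denominators ever introduced. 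You instead extract $q_1=4c_2$, prove only the weaker exact statement $(g+1)^gP(gc_1)\in\left(\alpha_{1,0}(gc_1),\alpha_{1,1}(gc_1)\right)$ via the per-factor identities $(g+1)q_k=\left((g+1)-2k^2\right)c_1^2+k^2\gamma$ with $\alpha_{1,1}(gc_1)=g\gamma$, the divisibility $2g+1\mid C'$ supplied by the $k=g+1$ factor $-(g+1)(2g+1)$, and the Bezout relation $2=2(2g+1)-4g$, and then remove the spurious factor $(g+1)^g$ by localizing at every prime: for $p\nmid g+1$ the factor is a unit, while for $p\mid g+1$ the coprimality of $g+1$ to $2g(2g+1)$ (valid since $g$ is even) makes the localized ideal equal to $(c_1,\gamma)$, which contains $P(gc_1)$ because $c_1\mid P(gc_1)$; the concluding local-global step is legitimate, since an element of any $\Z$-module whose image vanishes in every localization $\Z_{(p)}$ must vanish (its annihilator lies in no maximal ideal of $\Z$). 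I checked your computations — the separation $(g+1)^g\prod_{k=2}^{g+1}q_k=C'c_1^{2g}+\gamma\Phi'$ and both absorption steps are correct. As for what each approach buys: the paper's argument is shorter and stays entirely inside $\Z[c_1,c_2]$, but hinges on spotting a somewhat miraculous quadratic syzygy (as Vistoli did for $g=2$); yours is longer and invokes localization machinery, but is more systematic, requiring only the easy linear identities in each factor plus elementary arithmetic, and so could be found without guessing the exact identity.
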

\begin{proof}
Using the same arguments as in Section \ref{sec.gl2} 
it suffices to show that the restriction of $P(gc_1)$ to $A^*_T$
is in the ideal generated by the restrictions of $\alpha_{1,0}(gc_1)$
and $\alpha_{1,1}(gc_1)$.

The Chern roots of $E$ are $\{-(2g+2) t_1, -(2g+1)t_1 -t_2, \ldots , 
-t_1 -(2g+1)t_2, -(2g+2)t_2\}$
so 
\begin{equation}\label{eq.pc1.1}
P(gc_1)  = \prod_{i=0}^{2g+2} (gc_1 - (2g+2 -i)t_1 - it_2)
\end{equation}
Pairing off the $2g$ terms $(gc_1 - (2g+2 -i)t_1 - it_2)$
and $(gc_1 -it_1 - (2g+2-i)t_2)$ and observing
that if $i = g+1$ then $gc_1 - (2g+2 - i)t_1 - it_2
= -c_1$ we can rewrite \eqref{eq.pc1.1}
as
\begin{equation} \label{eq.pc1.2}
P(gc_1) = -c_1\prod_{j=0}^{g}\left( -(g-j)(g-j+2)c_1^2 +4(g+1 -j)^2c_2\right)
\end{equation}
The product of the $j = 0$ and $j = 1$ terms on the right hand side of
\eqref{eq.pc1.2} is
\begin{eqnarray*}
Q(c_1,c_2) & = & ((g-1)(g+1)c^2_1 - 4 g^2c_2)(g(g+2)c^2_1 - 4(g+1)^2c_2)\\
& = & 
(\alpha_{1,0}(g c_1))^2 c_2 + \alpha_{1,0}(g c_1) \alpha_{1,1}(g c_1)c_1 + (\alpha_{1,1}(g c_1))^2 
\end{eqnarray*}
\end{proof}

\subsection{Tautological classes} \label{sec.taut}
The identification, for $g$ even,  
of ${\mathcal H}_g = [({\mathbb A}^{2g+3} \smallsetminus 
\Delta_1)/\GL_2]$ means that the defining representation of
$\GL_2$ determines a vector bundle on ${\mathcal H}_g$
whose Chern classes generate the Chow ring.
Using an observation of Gorchinskiy and Viviani 
we can obtain 
a functorial geometric description of this bundle.

If $\pi \colon X \to S$ is a family of smooth hyperelliptic curves
of genus $g$
let $W \subset X$ be the divisor of Weierstrass points of the 
fibers of $\pi$ and let $\omega$ be the relative canonical
line bundle. Then ${\mathcal O}_{X}(W)$ has relative degree
$2g+2$ and the line bundle  
$\omega_{\pi}^{\otimes g/2} \otimes {\mathcal O}_X((1 -g/2)W)$
restricts to a $g^1_2$ on the fibers of $\pi$. Since $\pi$ is flat,
it follows that 
$$V_\pi = \pi_*(\omega_{\pi}^{\otimes g/2} \otimes {\mathcal O}_X((1 -g/2)W)$$
is rank 2 vector bundle on $S$. Let 
${\mathcal V}_g$ be the rank two bundle on the stack ${\mathcal H}_g$ which restricts to 
to $V_{\pi}$ on a family of hyperelliptic curves $X \stackrel{\pi} \to S$.
\begin{prop} \label{prop.taut}
Under the identification ${\mathcal H}_g = [({\mathbb A}^{2g+3}\smallsetminus
\Delta_1)/\GL_2]$ the vector bundle ${\mathcal V}_g$ corresponds
to the defining representation of $\GL_2$. In particular the
Chow ring of ${\mathcal H}_g$ is generated by the Chern classes
of ${\mathcal V}_g$.
\end{prop}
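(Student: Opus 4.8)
The plan is to make the universal family over $\Hcal_g = [(\A^{2g+3}\smallsetminus\Delta_1)/\GL_2]$ explicit $\GL_2$-equivariantly and to compute $\mathcal{V}_g = \pi_*(\omega_\pi^{\otimes g/2}\otimes\Oc_X((1-g/2)W))$ directly as a representation of $\GL_2$. Let $\Ec$ denote the bundle on $\Hcal_g$ attached to the defining representation $E$, let $q\colon P = \Pro(\Ec)\to\Hcal_g$ be the associated $\Pro^1$-bundle, and let $h\colon X\to P$ be the universal double cover, so that $\pi = q\circ h$. The universal form $f\in\Sym^{2g+2}E^*\otimes\mathcal{D}^{\otimes g}$ exhibits the branch divisor as a section of $\Oc_P(2g+2)\otimes q^*\mathcal{D}^{\otimes g}$. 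The first, and decisive, observation is that because $g$ is even this line bundle carries a $\GL_2$-equivariant square root $\Mcal = \Oc_P(g+1)\otimes q^*\mathcal{D}^{\otimes g/2}$; the cover $X$ is then $y^2 = f$ in the total space of $\Mcal$, the Weierstrass divisor $W$ is the zero locus of the tautological section $y$ of $h^*\Mcal$ so that $\Oc_X(W) = h^*\Mcal$, and $h_*\Oc_X = \Oc_P\oplus\Mcal^{-1}$.

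Next I would compute the line bundle $\Lc := \omega_\pi^{\otimes g/2}\otimes\Oc_X((1-g/2)W)$. The relative dualizing sheaf of the double cover is $\omega_\pi = h^*(\omega_{P/\Hcal_g}\otimes\Mcal)$, and the relative Euler sequence gives $\omega_{P/\Hcal_g} = \Oc_P(-2)\otimes q^*\mathcal{D}^{-1}$. Substituting and collecting exponents, the power of $\Oc_P(1)$ occurring in $\Lc$ is $\frac{g}{2}(g-1)+(1-\frac{g}{2})(g+1) = 1$, while the power of $q^*\mathcal{D}$ is $\frac{g}{2}(\frac{g}{2}-1)+(1-\frac{g}{2})\frac{g}{2} = 0$. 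Hence the determinant twists cancel and $\Lc = h^*\Oc_P(1)$; in particular $\Lc$ restricts to a base-point-free $g^1_2$ on each fiber, as anticipated.

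Finally I would push forward, using that the finite map $h$ has no higher direct images. The projection formula gives $h_*\Lc = \Oc_P(1)\otimes h_*\Oc_X = \Oc_P(1)\oplus(\Oc_P(-g)\otimes q^*\mathcal{D}^{-g/2})$, whence $\mathcal{V}_g = \pi_*\Lc = q_*\Oc_P(1)\oplus q_*(\Oc_P(-g)\otimes q^*\mathcal{D}^{-g/2})$. Because $\Oc_{\Pro^1}(-g)$ has no sections for $g\ge 1$, the second summand vanishes and $\mathcal{V}_g = q_*\Oc_P(1)$, a rank-two bundle whose fibers are the two-dimensional spaces $H^0(C,g^1_2)$. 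Under the convention $\Pro(\Ec)=\Proj(\Sym\Ec^*)$ this is precisely the bundle attached to the defining representation $E$ (at worst to its dual $E^*\cong E\otimes\mathcal{D}^{-1}$). The closing assertion then follows at once, since $c_1(\Ec)=c_1$ and $c_2(\Ec)=c_2$ generate $A^*_{\GL_2}=\Z[c_1,c_2]$, which surjects onto $A^*(\Hcal_g)$ by \eqref{quot}.

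The main obstacle here is bookkeeping rather than conceptual. One must justify the family-level identities $\Oc_X(W)=h^*\Mcal$, $\omega_\pi = h^*(\omega_{P/\Hcal_g}\otimes\Mcal)$ and $h_*\Oc_X=\Oc_P\oplus\Mcal^{-1}$ for the universal cover, and, most delicately, track every determinant twist so as to confirm both that the equivariant square root $\Mcal$ exists (this is exactly where evenness of $g$ enters) and that the twists cancel in $\Lc$. Pinning down the precise representation, that is, distinguishing the defining representation from its dual or from a $\mathcal{D}$-twist, requires fixing the $\Proj(\Sym\Ec^*)$ convention with care; but as noted this ambiguity does not affect the conclusion that the Chern classes of $\mathcal{V}_g$ generate the Chow ring.
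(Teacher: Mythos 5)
Your argument is correct in substance, but it is organized genuinely differently from the paper's, and it is worth seeing where the two routes meet. The paper argues functorially: for an arbitrary family $\pi\colon X\to S$ it factors $\pi=p\circ f$ through the Brauer--Severi variety $P$, uses $f_*\mathcal{O}_X=\mathcal{O}_P\oplus\mathcal{L}$ and Riemann--Hurwitz to get $V_\pi=p_*(\omega_{P/S}^{\otimes g/2}\otimes\mathcal{L}^{-1})$ (the analogue of your cancellation $\Lc=h^*\Oc_P(1)$ together with the vanishing of the degree $-g$ summand), then pulls everything back to Arsie--Vistoli's $\GL_2/\mu_{g+1}$-torsor $\tilde{\mathcal H}_{1,2,g+1}={\mathbb A}^{2g+3}\smallsetminus\Delta_1$, where the bundle becomes $(\det E)^{\otimes g/2}\otimes E$, and finally identifies this with the defining representation via the homomorphism $\alpha\colon\GL_2\to\GL_2$, $A\mapsto(\det A)^{g/2}A$, whose kernel is $\mu_{g+1}$. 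You instead build the universal curve directly over the quotient presentation and push forward there; the computations agree (your $q_*\Oc_P(1)$ is exactly the paper's $p_*(\omega_{P/S}^{\otimes g/2}\otimes\mathcal{L}^{-1})$ after substituting $\mathcal{L}^{-1}=\Mcal$). The one step you should not undersell is the one you call the decisive observation: that the universal cover is $y^2=f$ inside the total space of $\Mcal=\Oc_P(g+1)\otimes q^*\mathcal D^{\otimes g/2}$, i.e.\ that the universal Arsie--Vistoli line bundle carries precisely this $\mathcal{D}^{g/2}$-twisted equivariant structure. This is not mere bookkeeping; it is exactly the content of the paper's $\alpha$-argument (equivalently, of how the isomorphism $\GL_2/\mu_{g+1}\cong\GL_2$ is normalized in Theorem 4.1 of \cite{ArVi:04}), and it is the precise point where evenness of $g$ enters. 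It can be proved directly in your framework by exhibiting the unique lift of the action to the cover, $(x,y,f)\mapsto\bigl(Bx,(\det B)^{g/2}y,B\cdot f\bigr)$ with $(B\cdot f)(x')=(\det B)^g f(B^{-1}x')$, and checking this is the only group-theoretically consistent lift since $\GL_2$ has no nontrivial $2$-torsion characters. Your residual ambiguity between $E$ and $E^*\cong E\otimes\mathcal D^{-1}$ stems from the $\Proj(\Sym\Ec^*)$ convention; the paper's route through $\alpha$ pins the representation down on the nose, but since the ambiguity changes $c_1$ only by a sign and leaves $c_2$ fixed, the generation statement --- the part of the Proposition the rest of the paper uses --- is unaffected, as you say.
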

\begin{remark}
When $g = 2$, then ${\mathcal V}_g$ is the Hodge bundle and we recover
Vistoli's result \cite{Vis:98} that $A^*({\mathcal M}_2)$ is generated
by the Chern classes of the Hodge bundle.
\end{remark}
\begin{remark}
As observed in \cite{GoVi:06} the Chern classes of ${\mathcal V}_g$ 
are not tautological
classes. Gorchinskiy and Viviani show that the first Hodge class
$\lambda$ equals $(g/2) c_1$. In principal the methods of their paper 
could be extended to give formulas for all of the tautological
classes in terms of $c_1$ and $c_2$, but we do not pursue this here.
\end{remark}
\begin{proof}
Following \cite{ArVi:04} and \cite{GoVi:06}
we know that given a family of hyperelliptic curves $\pi \colon X \to S$
the map $\pi$ factors as $\pi = p \circ f$ where $p \colon P \to S$
is a Brauer-Severi variety and $f \colon X \to P$ is a double 
cover. Then $f_*{\mathcal O_X}= O_P \oplus {\mathcal L}$ where
${\mathcal L}$ is a line-bundle such that ${\mathcal L}^2 = {\mathcal O_P}(-D)$
where $D \subset P$ is the ramification divisor. Since $f$ is
a double cover, the family of elliptic curves is uniquely determined
by the data $(p \colon P \to   S, {\mathcal L})$.

The identity $f^*{\mathcal L}^{-1} = {\mathcal O}_X(W)$ 
and the Riemann-Hurwitz formula 
imply that 
$$f_*({\omega}^{\otimes g/2} \otimes {\mathcal O}_X((1-g/2)W) = 
{\omega_{P/S}}^{\otimes g/2} \otimes {\mathcal L}^{-1} \otimes ({\mathcal O}_P \oplus
{\mathcal L})$$
Since the restriction of $\omega_{P/S}^{\otimes g/2}$ has degree $-g$
we see that 
$$\pi_*(\omega^{\otimes g/2} \otimes {\mathcal O}_X((1-g/2)W) = 
p_*(\omega_{P/S}^{\otimes g/2} \otimes {\mathcal L}^{-1}).$$

As noted in \cite[Remark 3.3]{ArVi:04} we
may identify ${\mathcal H}_g$ with the stack ${\mathcal H}'_{1,2,g+1}$
whose objects over a $k$-scheme $S$ consists of the data
of a Brauer-Severi variety $p \colon P \to S$ together with
a line bundle ${\mathcal L}$ of relative degree $-(g+1)$
and an injection  $i \colon {\mathcal L}^2 \hookrightarrow {\mathcal O}_P$.
From the previous paragraph we see that we may identify ${\mathcal V}_g$
with the bundle whose restriction to $P \to S$ is the
vector bundle $p_*(\omega_{P/S}^{\otimes g/2} \otimes {\mathcal L}^{-1})$.

Let $\tilde{{\mathcal H}}_{1,2,g+1}$ be the stack whose objects over
$S$ consists of a pair $(P \stackrel{p} \to S, {\mathcal L})$ together
with an isomorphism $\phi\colon (P, {\mathcal L})\to (\Pro^1_S,
{\mathcal O}_{\Pro^1}(-g-1))$. The action of $\GL_2:= \Aut(\Pro^1, {\mathcal O}(1))$ on the pair
$(\Pro^1, {\mathcal O}(-g-1))$ has kernel $\mu_{g-1}$ and by \cite[Theorem 4.1]{ArVi:04}
$\tilde{{\mathcal H}}_{1,2,g+1}$ is represented by the scheme ${\mathbb A}^{2g+3}
\smallsetminus \Delta_1$ and the forgetful map $\tilde{\mathcal
H}_{1,2,g+1} \to {\mathcal H}_{1,2,g+1}$ is a
$\GL_2/\mu_{g+1}$-torsor. 

The vector bundle ${\mathcal V}_g$ pulls
back to the vector bundle which assigns to the trivial family
$p_*\colon \Pro^1_S \to S$ the vector bundle
$p_*(\omega_{\Pro^1}^{\otimes g/2} \otimes {\mathcal O}_{\Pro^1}(g+1))$.
As noted in \cite[Equation (4.4)]{GoVi:06} the Euler sequence for the tangent bundle of $\Pro^1$ implies
$\omega_{\Pro^1}$ is $\GL_2$-equivariantly
isomorphic to the bundle ${\mathcal O}(-2) \otimes \det E$ where $E$ is the defining representation of $\GL_2$.
Thus,
$$p_*(\omega_{\Pro^1}^{\otimes g/2} \otimes {\mathcal O}_{\Pro^1}(g+1))
= (\det E)^{\otimes g/2} \otimes E.$$ Now $(\det E)^{\otimes g/2} \otimes E$
is the pullback of $E$ via the map $\alpha\colon \GL_2 \to \GL_2$, $A \mapsto
(\det A)^{g/2} A$. As noted in \cite{ArVi:04}  $\ker \alpha
= \mu_{g+1}$, so under the identification of
$\GL_2/\mu_{g+1}=\GL_2$, $(\det E)^{\otimes g/2} \otimes E$ corresponds
to the defining representation $E$.
\end{proof}

%\bibliographystyle{amsmath}
%\bibliography{refs}
%\input{hyperelliptic.bbl}
\def\cprime{$'$}

\end{document}